\definecolor{refkeybis}{gray}{.65}
\definecolor{labelkeybis}{gray}{.65}
{\makeatletter
\def\SK@refcolor{\color{refkeybis}}%
\def\SK@labelcolor{\color{labelkeybis}}}
\numberwithin{equation}{section} 
\newtheorem{theorem}{Theorem}[section]
\newtheorem{lemma}[theorem]{Lemma}
\newtheorem{proposition}[theorem]{Proposition}
\newcommand{\N}{\mathbb{N}}
\newcommand{\R}{\mathbb{R}}
\newcommand{\C}{\mathbb{C}}
\newcommand{\Leb}[1]{{\mathscr L}^{#1}} 
\newcommand{\<}{\langle}
\renewcommand{\>}{\rangle}
\renewcommand{\a}{\alpha}
\newcommand{\e}{\varepsilon}
\newcommand{\p}{\partial}
\newcommand{\Probabilities}[1]{\mathscr P\bigl(#1\bigr)} 
\newcommand{\Error}[2]{{\mathscr E}_\varepsilon'(#1,#2)}
\newcommand{\Errordelta}[2]{{\mathscr E}_\varepsilon(#1,#2)}
 \newcommand{\bb}{{\mbox{\boldmath$b$}}}
 \newcommand{\tauV}{{\kern-3pt\tau}}
 \newcommand{\oVVVk}{\overline{\mbox{\boldmath$V$}}\kern-3pt}
 \newcommand{\tVVVk}{\tilde{\mbox{\boldmath$V$}}\kern-3pt}
 \newcommand{\eps}{\varepsilon}
\newcommand{\esssup}{\mathop{\rm ess \, sup}}
 \newcommand{\LL}{\mathscr{L}}
 \newcommand{\WW}{\mathscr{W}}
\def\signaf{\bigskip \begin{center} {\sc Alessio Figalli\par\vspace{3mm}
Department of Mathematics\\
The University of Texas at Austin\\
1 University Station, C1200\\
Austin TX 78712, USA\\
email:} \texttt{figalli@math.utexas.edu} \end{center}}
\def\signml{\bigskip\begin{center} {\sc Marilena Ligab\`o\par\vspace{3mm}
Dipartimento di Matematica\\
Universit\`a degli studi di Bari\\
Via E. Orabona, 4\\
70125 Bari, Italy\\
email:} \texttt{ligabo@dm.uniba.it}
\end{center}}
\def\signtp{\bigskip \begin{center} {\sc Thierry Paul\par\vspace{3mm}
Centre de math\'ematiques Laurent Schwartz - UMR 7640\\
Ecole Polytechnique\\
Palaiseau 91128, France\\
email:} \texttt{thierry.paul@math.polytechnique.fr}
\end{center}}
\begin{document}


 \title{Semiclassical limit for mixed states\\ with singular and rough potentials}
 \author{Alessio Figalli, Marilena Ligab\`o and Thierry Paul}

\footnotetext{AF is partially supported by the NSF grant DMS-0969962.}

 \maketitle
\date{}
\begin{abstract}
We consider the semiclassical limit for the Heisenberg-von Neumann equation with a potential which consists of the sum of a repulsive Coulomb potential, plus a Lipschitz potential
whose gradient belongs to $BV$; this assumption on the potential guarantees the well posedness of the Liouville equation in the space of bounded integrable solutions.
We find sufficient conditions on the initial data to ensure that the quantum dynamics converges to the classical one.
More precisely, we consider the Husimi functions of the solution of the Heisenberg-von Neumann equation, and under suitable assumptions on the initial data
we prove that they converge, as $\e \to 0$,
to the unique bounded solution of the Liouville equation (locally uniformly in time).
\end{abstract}


\section{Introduction}\label{intro}
The aim of this paper is to study the semiclassical limit for the Heisenberg-von Neumann (quantum Liouville) equation:
\begin{equation} \label{eq:Hei} \left\{
\begin{array}{l} i\e \p_t \tilde\rho_t^\e=[H_\e,\tilde\rho_\e^t],\ \ \ \ \ \ \ \ \ \ \ \ \ \ \ \ \ \ \ \ \ \\\\
\tilde\rho^\e_0=\tilde\rho_{0,\e},
\end{array}
\right.
\end{equation}
$\{\tilde\rho_{0,\e}\}_{\e>0}$ being a family of uniformly bounded (with respect to $\e$), positive, trace class operators, and with $H_\e=-\frac{\e^2}{2}\Delta+U$.

When $\tilde\rho_{0,\e}$ is the orthogonal projector onto $\psi_{0,\e}\in L^2(\R^n)$, (\ref{eq:Sch}) is equivalent (up to a global phase)  to the
Schr\"odinger equation
\begin{equation} \label{eq:Sch} \left\{
\begin{array}{l} i\e \p_t \psi^\e_t=-\frac{\e^2}{2}\Delta \psi^\e_t+U\psi^\e_t
=H_\e\psi^\e_t,\\\\
\psi^\e_0=\psi_{0,\e}\in L^2(\R^n),
\end{array}
\right.
\end{equation}
We recall that the Wigner transform $W_\e\psi$ of a function $\psi
\in L^2(\R^n)$ is defined as
$$
W_\e\psi(x,p):=\frac{1}{(2\pi)^n}\int_{\R^n}\psi(x+\frac{\e}{2}y)
\overline{\psi(x-\frac{\e}{2}y)}e^{-ipy}dy,
$$
and the one of a density matrix $\tilde\rho$ is defined as
\begin{equation}\label{Wignerdensity}
W_\e\rho(x,p):=\frac{1}{(2\pi)^n}\int_{\R^n}\rho(x+\frac{\e}{2}y,
x-\frac{\e}{2}y)e^{-ipy}dy,
\end{equation}
where $\rho(x,x')$ denotes the integral kernel associated to the operator $\tilde\rho$.

The weak limit of the Wigner function of the solution of (\ref{eq:Sch}) or (\ref{eq:Hei}) has
been studied in many articles (e.g. \cite{lionspaul,gerard,gerard3}, and more recently
in  strong topology in \cite{athanassoulis3,athanassoulis2}).
More precisely, it is well-known that the limit dynamics of the Schr\"odinger equation is related to the Liouville equation
\begin{equation}
\label{eq:liouville}
\p_t \mu + p \cdot \nabla_x\mu- \nabla U(x)\cdot \nabla_p \mu=0,
\end{equation}
and, roughly speaking, the above results state that:\\
(A) If $U$ is of class $C^2$ and there exists a sequence $\e_k \to 0$ such that $W_{\e_k}\rho_{0,\e_k}$ converges in the sense of distribution
to some (nonnegative) measure $\mu_0$, then $W_{\e_k}\rho_t^{\e_k} \to (\Phi_t)_\# \mu_0$ (the convergence is again in the sense of distribution), where $\Phi_t$ is the
(unique) flow map
associated to the Hamiltonian system
\begin{equation}
\label{eq:ODE}
\left\{ \begin{array}{l}
        \dot x = p,\\
	\dot p=-\nabla U(x)
        \end{array}
\right.
\end{equation}
so that $\mu_t:=(\Phi_t)_\# \mu_0$ is the unique solution to \eqref{eq:liouville} (here and in the sequel, $\#$ denotes the push-forward, so that $\mu_t(A)=\mu_0(\Phi_t^{-1}(A))$ for all $A\subset \R^{2n}$ Borel).\\
(B) If $U$ is of class $C^1$ and there exists a sequence $\e_k \to 0$ such that the curve $t \mapsto W_{\e_k}\rho_t^{\e_k}$ converges in the sense of distribution
to some curve of (nonnegative) measure $t \mapsto \mu_t$, then $\mu_t$ solves \eqref{eq:liouville}.

In the present paper we want to use some recent results proved in \cite{amfiga2,AFFGP} to improve the
literature in two directions:
\begin{enumerate}
\item By lowering the regularity assumptions of (A) on the potential in order get convergence results for a more general class of potentials, as described below.

\item Get rid of the ``after an extraction of a subsequence'' argument, due to compactness,
used in most of the available proofs
where one is unable to uniquely identify the limit.
More precisely, in (B) above one needs to take a subsequence along which the whole curve $t \mapsto W_{\e_k}\rho_t^{\e_k}$
converges  for all $t$ in order to obtain a solution to \eqref{eq:liouville}. Moreover, the limiting solution may depend on the particular subsequence.
In our case we will be able to show that, for a class potential much larger than $C^2$,
once one assumes that the Wigner functions at time $t=0$ have a limit,
then the limit at any other time will converge to a ``uniquely identified'' solution of \eqref{eq:liouville}.
\end{enumerate}
The price to pay for the lack of regularity of the potential will be to have some size condition on the initial datum which
forbids the possibility of
 considering pure states. Even more, the Wigner function of the initial datum cannot concentrate at a point, a possibility
 which might actually enter in conflict with the fact that the underlying flow is not uniquely defined everywhere. Let us mention
 however that, with extra assumptions on the potential (but still allowing the possibility of not
 having uniqueness of a classical flow), it is possible to consider concentrating initial Wigner functions, giving rise to
 atomic measures whose evolution follows the ``multicharacteristics'' of the flow (see \cite{athanassoulis2}).

As described below, we will nevertheless show that, for general bounded and globally Lipschitz potential
 associated to locally $BV$ vector fields (in addition to some Coulomb part), the Wigner measure of the solution at any time
 is the push-forward of the initial one by the Ambrosio-DiPerna-Lions flow \cite{lions,ambrosio}.


Our method will use extensively the Husimi transforms
$\psi\mapsto\tilde{W}_\e\psi$ and $\rho\mapsto\tilde{W}_\e\rho$, which we recall are defined
in terms of convolution of the Wigner transform with the
$2n$-dimensional Gaussian kernel with variance $\e/2$:
\begin{equation}\label{heatK}
\tilde W_\e\psi:=(W_\e\psi)\ast G^{(2n)}_\e,\quad
\tilde W_\e\rho:=(W_\e\rho)\ast G^{(2n)}_\e,\quad G_\e^{(2n)}(x,p):=
\frac{e^{-(|x|^2+|p|^2)/\e}}{(\pi\e)^n}=G^{(n)}_\e(x)G^{(n)}_\e(p).
\end{equation}
Of course, the asymptotic behaviour of the Wigner and Husimi transform is the same in the limit $\e\to 0$. However, one of the
main advantages of the Husimi transform is that it is
nonnegative (see Appendix).

Let us observe that, thanks to \eqref{eq:husimi}, the $L^\infty$-norm of $\tilde W_\e\psi$ can be
estimated using the Cauchy-Schwarz inequality:
$$
\tilde W_\e\psi(x,p)\leq \frac{1}{\e^n}\|\psi\|^2_{L^2}
\|\phi^\e_{x,p}\|_{L^2}^2=\frac{\|\psi\|^2_{L^2}}{\e^n}.
$$
However, this estimate blows up as $\e\to 0$.
On the other hand we will prove that,
by averaging the initial condition with respect to translations,
we can get a uniform estimate as $\e\to 0$ (Section \ref{sect:toplitz}). This gives us, for instance, an important family of initial data to which our result
and the ones in \cite{AFFGP} apply (see also the other examples in Section
\ref{sect:examples}).

\section{The main results}
\subsection{Setting}
\label{sect:setting}
We are concerned with the derivation of classical
mechanics from quantum mechanics, corresponding to the study of the
asymptotic behaviour of solutions $\tilde{\rho}^\e_t$ to the
Heisenberg-von Neumann equation
\begin{equation}
\label{eq:Sch1} \left\{
\begin{array}{l} i\e \p_t \tilde{\rho}^\e_t=[H_\e,\tilde{\rho}^\e_t]\\\\
\tilde{\rho}^\e_0=\tilde{\rho}_{0,\e},
\end{array}
\right.
\end{equation}
as $\e\to 0$, where $H_\e=-\frac{\e^2}{2}\Delta+U$, and $U:\R^n \to \R$ is of the form $U_b+U_s$ on $\R^n$, where $U_s$ is a repulsive Coulomb potential
$$
U_s(x)=\sum_{1\leq i < j \leq M} \frac{Z_i Z_j}{|x_i-x_j|}, \qquad M \leq n/3,\,  x=(x_1, \dots x_M, \bar x) \in (\R^3)^M \times \R^{n-3M}, \, Z_i >0,
$$
$U_b$ is globally  bounded, locally Lipschitz, $\nabla U_b\in BV_{\textrm{loc}}(\R^n;\R^n)$, and
$$
\esssup_{x \in \R^{n}}\, \frac{|\nabla U_b(x)|}{1+|x|} <+\infty.
$$
The formal solution of (\ref{eq:Sch}) is $\tilde{\rho}^{\e}_t$, where
$$
\tilde{\rho}^{\e}_t:=e^{-itH_{\e}/\e}\tilde{\rho}_{0,\e}e^{-itH_{\e}/\e}
$$
and its kernel is $\rho^{\e}_t$. Moreover, as shown for instance in \cite{lionspaul}, $W_\e{\rho_t^\e}$ solves in the sense of distributions the
equation
\begin{equation}\label{Wigner_calc_in}
\partial_t W_\e\rho^\e_t+p\cdot\nabla_x W_\e\rho^\e_t
=\Errordelta{U}{\rho^\eps_t},
\end{equation}
where $\Errordelta{U}{\rho}$ is given by
\begin{equation}\label{error_delta}
\Errordelta{U}{\rho}(x,p):= -\frac{i}{(2\pi)^n}\int_{\R^n} \biggl[
\frac{U(x+\tfrac{\e}{2}y)-U(x-\tfrac{\e}{2}y)}{\e}\biggr]
\rho(x+\frac{\e}{2}y,x-\frac{\e}{2}y)e^{-ipy}dy.
\end{equation}
Adding and subtracting $\nabla U(x)\cdot y$ in the term in square
brackets and using $ye^{-ip\cdot y}=i\nabla_p e^{-ip\cdot y}$, an
integration by parts gives $\Errordelta{U}{\rho}=\nabla U(x)\cdot
\nabla_pW_\e\rho+\Error{U}{\rho}$, where $\Error{U}{\rho}$ is
given by
\begin{equation}\label{error}
\Error{U}{\rho}(x,p):= -\frac{i}{(2\pi)^n}\int_{\R^n} \biggl[
\frac{U(x+\tfrac{\e}{2}y)-U(x-\tfrac{\e}{2}y)}{\e}-\nabla
U(x)\cdot y \biggr] \rho(x+\frac{\e}{2}y,x-\frac{\e}{2}y)e^{-ipy}dy.
\end{equation}
Let $\bb:\R^{2n}\to\R^{2n}$ be the autonomous divergence-free vector field $\bb(x,p):=\bigl(p,-\nabla U(x)\bigr)$.
Then, by the discussion above, $W_\e\rho^\e_t$ solves the Liouville equation associated to $\bb$ with an error
term:
\begin{equation}\label{Wigner_calc}
\partial_t W_\e\rho^\e_t+\bb \cdot \nabla
W_\e\rho^\e_t =\Error{U}{\rho^\eps_t}.
\end{equation}
On the other hand, thanks to \eqref{Wigner_calc_in}, it is not difficult to prove that $\tilde{W}_{\e}\rho^{\e}_t$ solves in the sense of distributions the
equation
\begin{equation}\label{pdehusimi}
\partial_t\tilde W_\e\rho^\e_t+p\cdot\nabla_x\tilde W_\e\rho^\e_t=
\Errordelta{U}{\rho^\e_t}\ast G^{(2n)}_\e-\sqrt{\e}\nabla_x\cdot
[W_\e\rho^\e_t\ast \bar{G}_\e^{(2n)}],
\end{equation}
where
\begin{equation}\label{pdehusimibis}
\bar{G}_\e^{(2n)}(y,q):=\frac{q}{\sqrt{\e}}G^{(2n)}_\e(y,q).
\end{equation}
Since $W_\e\rho_t^\e$ and $\tilde W_\e\rho_t^\e$ have the same limit points as $\e \to 0$, the
heuristic idea is that in the limit $\e\to 0$ all error terms should disappear, and we should be left with the Liouville equation (which describes the classical dynamics)
$$
\partial_t \omega_t+\bb \cdot \nabla \omega_t=0\qquad \text{on }\R^{2n}.
$$

\subsection{Preliminary results on the Liouville equations}
Under the above assumptions on $U$ one cannot hope for a general uniqueness result in the space of measures for the Liouville equation,
as this would be equivalent to uniqueness for the ODE with vector field $\bb$ (see for instance \cite{cetraro}).
On the other hand, as shown in \cite[Theorem 6.1]{AFFGP}, the equation
\begin{equation}\label{eqn:continuity}
\left\{
\begin{array}{l} \partial_t \omega_t+\bb \cdot \nabla \omega_t=0 \ \ \ \ \ \ \ \ \ \ \ \ \ \ \ \ \ \ \ \ \ \\\\
\omega_0=\bar{\omega} \in L^{1}(\R^{2n})\cap L^{\infty}(\R^{2n})  \textrm{ and nonnegative},
\end{array}
\right.
\end{equation}
has existence and uniqueness in the space
$L^{\infty}_+([0,T];L^{1}(\R^{2n})\cap L^{\infty}(\R^{2n}))$.
This means that there exist a unique $\WW :[0,T] \to L^{1}(\R^{2n})\cap L^{\infty}(\R^{2n})$,
nonnegative and such that $\esssup_{t \in[0,T]}\|\WW_t\|_{L^1(\R^{2n})}+\|\WW_t\|_{L^\infty(\R^{2n})}  < +\infty$, that solves (\ref{eqn:continuity}) in the sense of distributions on $[0,T]\times \R^{2n}$.

One may wonder whether, in this general setting, solutions to the transport equation can still be described using the theory of characteristics. Even if in this case one cannot solve uniquely the ODE, one can still prove that there exists a unique flow map in the ``Ambrosio-DiPerna-Lions sense''. Let us recall the definition of \textit{Regular Lagrangian Flow}
(in short RLF) in the sense of Ambrosio-DiPerna-Lions:

We say that a (continuous) family of maps $\Phi_t:\R^{2n}\to \R^{2n}$, $t \geq 0$, is a RLF associated to \eqref{eq:ODE} if:
\begin{enumerate}
\item[-] $\Phi_0$ is the identity map.
\item[-] For $\LL^{2n}$-a.e. $(x,p)$, $t \mapsto \Phi_t(x,p)$ is an absolutely continuous curve solving \eqref{eq:ODE}.
\item[-] For every $T>0$ there exists a constant $C_T$ such that $(\Phi_t)_\# \LL^{2n}\leq C_T \LL^{2n}$ for all $t \in [0,T]$,
\end{enumerate}
where $\LL^{2n}$ denotes the Lebesgue measure on $\R^{2n}$. \\
Observe that, since $\nabla U$ is not Lipschitz, a priori the ODE \eqref{eq:ODE} could have more than one solution for some initial condition.
However, the approach via RLFs allows to get rid of this problem by looking at solutions to \eqref{eq:ODE} as a whole, and under suitable assumptions
on $U$ the RLF associated to \eqref{eq:ODE} exists, and it is unique in the following sense:
assume that $\Phi^1$ and $\Phi^2$ are two RLFs. Then, for $\LL^{2n}$-a.e. $(x,p)$, $\Phi_t^1(x,p)=\Phi_t^2(x,p)$ for all $t \in [0,+\infty).$ In particular, as shown in \cite[Section 6]{AFFGP},
the unique solution to \eqref{eqn:continuity} is given by
\begin{equation}
\label{eq:sol push forward}
\omega_t \LL^{2n}=(\Phi_t)_\# \bigl(\bar\omega \LL^{2n} \bigr).
\end{equation}

Hence, the idea is that, if we can ensure that any limit point of the Husimi transforms $\tilde W_\e\rho^\e_t$ give rives to a curve of measure belonging to
$L^{\infty}_+([0,T];L^{1}(\R^{2n})\cap L^{\infty}(\R^{2n}))$, by the aforementioned result we would deduce that the limit is unique (once the limit initial datum is fixed), and moreover it is transported by the unique RLF.
In order to get such a result we need to make some assumptions on the initial data.

\subsection{Assumptions on the initial data and main theorem}
\label{sect:assumptions and theorem}
Let $\{\tilde{\rho}_{0,\e}\}_{\e \in (0,1)}$ be a family of initial data which satisfy
$$
\tilde{\rho}_{0,\e}=\tilde{\rho}_{0,\e}^{*}, \quad \tilde{\rho}_{0,\e} \geq 0 \quad \textrm{and} \quad \textrm{tr}(\tilde{\rho}_{0,\e})=1\qquad \forall\,\e\in(0,1).
$$
Let
$$
\tilde{\rho}_{0,\e}=\sum_{j\in \N} \mu_{j}^{(\e)} \langle \phi_{j}^{(\e)}, \cdot \rangle  \phi_{j}^{(\e)}
$$
be the spectral decomposition of $\tilde{\rho}_{0,\e}$, and denote by $\rho_{0,\e}$ its integral kernel.

We assume:
\begin{equation}\label{eqn:hypH2}
\sup_{\e\in (0,1)} \sum_{j\in \N} \mu_{j}^{(\e)} \|H_{\e}\phi_{j}^{(\e)} \|^2 < +\infty,
\end{equation}
\begin{equation}\label{eqn:disop}
\frac{1}{\e^n} \tilde{\rho}_{0,\e} \leq  C \,\textrm{Id},
\end{equation}
\begin{equation}\label{eqn:tightness}
\lim_{R \to +\infty} \sup_{\e\in (0,1)} \int_{\R \setminus B_{R}^{(n)}} \rho_{0,\e}(x,x) \; dx =0,
\end{equation}
\centerline{and}
\begin{equation}\label{eqn:tightness2}
\lim_{R \to +\infty} \sup_{\e\in(0,1)} \frac{1}{(2\pi\e)^n} \int_{\R \setminus B_{R}^{(n)}} \mathcal F\rho_{0,\e}\Bigl(\frac{p}{\e},\frac{p}{\e}\Bigr) \; dp =0,
\end{equation}
where $B_{R}^{(n)}$ is the ball of radius $R$ in $\R^n$ and $\mathcal F$ is the Fourier transform on $\R^{2n}$, see \eqref{eq:Fourier}.
Conditions \eqref{eqn:tightness} and \eqref{eqn:tightness2} are equivalent to asking
that the family of probability measure
$\{\tilde W_\e\rho_{0,\e}\}_{\e\in(0,1)}$ is tight (see Appendix). By Prokhorov's Theorem, this is equivalent to the compactness of $\{\tilde W_\e\rho_{0,\e}\}_{\e\in(0,1)}$ with respect to the weak topology of probability measures (i.e., in the duality with $C_b(\R^{2n})$, the space of bounded continuous functions).
Hence, up to extracting a subsequence, assumptions \eqref{eqn:tightness} together with \eqref{eqn:tightness2} is equivalent to the existence of a probability density $\bar\omega$ such that
\begin{equation}\label{eqn:exlimmeas}
w-\lim_{\e \to 0} \tilde W_\e\rho_{0,\e} \Leb{2n} =\bar{\omega} \Leb{2n}\in \Probabilities{\R^{2n}},
\end{equation}
where $\Probabilities{\R^{2n}}$ denotes the space of probability measure on $\R^n$.
In order to avoid a tedious notation which would result by working with a subsequence $\e_k$,
we will assume that \eqref{eqn:exlimmeas} holds along the whole sequence $\e\to 0$,
keeping in mind that all the arguments could be repeated with an arbitrary subsequence.

Let us observe that condition \eqref{eqn:hypH2} is slightly weaker than $\sup_{\e \in (0,1)}\textrm{tr}(H_{\e}^2\tilde{\rho}_{0,\e}) <+\infty,$ as in order to give a sense to the latter
we need the operator $H_{\e}^2\tilde{\rho}^t_{\e}$ to make sense (at least on a core).
Concerning assumption \eqref{eqn:exlimmeas},  let us observe that
the hypothesis $\textrm{tr}(\tilde{\rho}_{0,\e})=1$ implies that $\tilde W_\e\rho_{0,\e} \in \Probabilities{\R^{2n}}$ (see Appendix).

To express in a better and cleaner way the fact that the convergence is uniform in time, we denote by $d_{{\mathscr P}}$ any bounded distance inducing the weak topology in $\Probabilities{\R^{2n}}$.
Recall also that $\Phi_t$ denotes the unique RLF associated to
$\bb(x,p)=(p,-\nabla U(x))$, so that $(\Phi_t)_\# \bigl(\bar\omega
\LL^{2n} \bigr)$ is the unique nonnegative solution of
\eqref{eqn:continuity} in $L^{\infty}_+([0,T];L^{1}(\R^{2n})\cap
L^{\infty}(\R^{2n}))$.
\begin{theorem}\label{thm:mainthm}
Let $U$ be as in Section \ref{sect:setting}.
Under the assumptions (\ref{eqn:hypH2}), (\ref{eqn:disop}) and (\ref{eqn:exlimmeas})
\begin{equation}\label{eqn:thesis}
\lim_{\e \to 0} \sup_{[0,T]} d_{{\mathscr P}}(\tilde{W}_{\e}\rho_{t}^{\e}\Leb{2n},(\Phi_t)_\# \bigl(\bar\omega \LL^{2n} \bigr))=0.
\end{equation}
Moreover, if we define $\WW_t \Leb{2n}=(\Phi_t)_\# \bigl(\bar\omega \LL^{2n} \bigr)$, for every smooth function $\varphi \in C^\infty_c(\R^{2n})$ the map $t \mapsto \int_{\R^{2n}}\varphi \WW_t \,dx\,dp$
is continuously differentiable, and
$$
\frac{d}{dt} \int_{\R^{2n}}\varphi \WW_t \,dx\,dp=\int_{\R^{2n}} \bb \cdot \nabla \varphi  \WW_t\, dx\,dp.
$$
\end{theorem}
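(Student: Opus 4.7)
The strategy is to use the parabolic-like PDE \eqref{pdehusimi} for the Husimi transforms $\tilde W_\e\rho_t^\e$, combined with the hypotheses (\ref{eqn:hypH2})--(\ref{eqn:exlimmeas}), to extract a compactness statement in $C([0,T];\Probabilities{\R^{2n}})$. Once we identify a limit, the uniqueness result \cite[Theorem 6.1]{AFFGP} recalled in Section 2.2 will force it to equal $(\Phi_t)_\#(\bar\omega\Leb{2n})$, which promotes subsequential convergence to full-sequence convergence and gives the uniform-in-time statement \eqref{eqn:thesis}.

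First, I would establish the uniform bounds needed. Since $\tilde\rho_t^\e=e^{-itH_\e/\e}\tilde\rho_{0,\e}e^{itH_\e/\e}$ preserves the operator inequality (\ref{eqn:disop}), we have $\tilde\rho_t^\e\leq C\e^n\,\mathrm{Id}$ for all $t$. Expressing $\tilde W_\e\rho_t^\e(x,p)$ as $\e^{-n}\langle\phi^\e_{x,p},\tilde\rho_t^\e\phi^\e_{x,p}\rangle$ (the representation recalled in the Appendix) then yields $\|\tilde W_\e\rho_t^\e\|_{L^\infty(\R^{2n})}\leq C$ uniformly in $\e\in(0,1)$ and $t\geq0$. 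Trace preservation gives $\|\tilde W_\e\rho_t^\e\|_{L^1}=1$, so the family is uniformly bounded in $L^1\cap L^\infty$. Tightness at all times will follow from the tightness at $t=0$ by using the PDE (\ref{pdehusimi}) tested against $1-\chi_R$ with $\chi_R$ a cut-off, controlling the growth through $\bb(x,p)=(p,-\nabla U(x))$ together with the sublinear bound on $\nabla U_b$ and the fact that the Coulomb piece $U_s$ has an explicit repulsive dynamics (this is exactly the kind of moment estimate carried out in \cite{AFFGP}).

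Next I would prove weak equicontinuity in time. Testing (\ref{pdehusimi}) against $\varphi\in C^\infty_c(\R^{2n})$ gives
\begin{equation*}
\frac{d}{dt}\int \varphi\,\tilde W_\e\rho_t^\e\,dx\,dp=\int p\cdot\nabla_x\varphi\,\tilde W_\e\rho_t^\e\,dx\,dp+\int \varphi\,\bigl(\Errordelta{U}{\rho_t^\e}\ast G^{(2n)}_\e\bigr)\,dx\,dp+O(\sqrt\e),
\end{equation*}
where the last term is controlled via the $L^2$ bound on $W_\e\rho_t^\e$ coming from the Hilbert--Schmidt norm of $\tilde\rho_{0,\e}$. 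The drift term, using $\Errordelta{U}{\rho}=\nabla U\cdot\nabla_pW_\e\rho+\Error{U}{\rho}$, is rewritten as $\int\nabla U\cdot\nabla_p\varphi\,W_\e\rho_t^\e\ast G^{(2n)}_\e+\int\varphi\,\Error{U}{\rho_t^\e}\ast G^{(2n)}_\e$. The crucial input \eqref{eqn:hypH2} controls the ``quantum kinetic energy'' $\mathrm{tr}(H_\e^2\tilde\rho_t^\e)$ uniformly in $t$, which by the estimate on $\Error{U}{\rho}$ proved in \cite{AFFGP} (using $\nabla U_b\in BV_{\textrm{loc}}$ and the Coulomb structure of $U_s$) yields $|\langle\varphi,\Error{U}{\rho_t^\e}\rangle|\to 0$. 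Together this gives uniform boundedness of $|\frac{d}{dt}\int\varphi\,\tilde W_\e\rho_t^\e|$ and, via Ascoli--Arzel\`a on a countable dense set of test functions combined with tightness, extraction of a subsequence $\tilde W_\e\rho_t^\e\Leb{2n}\to\WW_t\Leb{2n}$ in $C([0,T];\Probabilities{\R^{2n}})$ equipped with $d_{\mathscr P}$.

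The hard step is passing to the limit in the Husimi PDE. Letting $\e\to 0$ in the identity above, the kinetic term converges trivially, the $O(\sqrt\e)$ term disappears, the error $\int\varphi\,\Error{U}{\rho_t^\e}\ast G^{(2n)}_\e$ vanishes by \eqref{eqn:hypH2}, and the regularised drift $\int\nabla U\cdot\nabla_p\varphi\,W_\e\rho_t^\e\ast G^{(2n)}_\e$ converges to $\int\nabla U\cdot\nabla_p\varphi\,\WW_t\,dx\,dp$ because $\nabla U\in L^1_{\mathrm{loc}}$ and $W_\e\rho_t^\e\ast G^{(2n)}_\e=\tilde W_\e\rho_t^\e\to\WW_t$ in the sense of measures (with tightness). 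Hence $\WW_t$ solves the Liouville equation (\ref{eqn:continuity}) in $\DD'([0,T]\times\R^{2n})$. The $L^\infty$ bound on $\tilde W_\e\rho_t^\e$ passes to the limit, so $\WW\in L^\infty_+([0,T];L^1(\R^{2n})\cap L^\infty(\R^{2n}))$, with initial datum $\bar\omega$ by (\ref{eqn:exlimmeas}). Applying \cite[Theorem 6.1]{AFFGP} forces $\WW_t\Leb{2n}=(\Phi_t)_\#(\bar\omega\Leb{2n})$; since this limit does not depend on the subsequence, the whole sequence converges, proving \eqref{eqn:thesis}.

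Finally, the differentiability statement follows because $\WW_t$ solves \eqref{eqn:continuity} distributionally and $\bb\cdot\nabla\varphi\in L^\infty$ for $\varphi\in C^\infty_c(\R^{2n})$ (using again the form of $U$): the map $t\mapsto\int\bb\cdot\nabla\varphi\,\WW_t\,dx\,dp$ is continuous because $t\mapsto\WW_t\Leb{2n}$ is narrowly continuous (being a push-forward along the continuous flow $\Phi_t$) and $\bb\cdot\nabla\varphi$ is bounded and continuous $\WW_t$-almost everywhere; the distributional Liouville equation then upgrades to pointwise differentiability. The main obstacle in the above scheme is the quantitative control of $\Error{U}{\rho_t^\e}$ for $U$ only Lipschitz with $\nabla U\in BV$ plus a Coulomb singularity, which is precisely the place where the ``kinetic energy'' bound \eqref{eqn:hypH2} together with the results of \cite{amfiga2,AFFGP} is needed.
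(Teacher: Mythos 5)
Your overall plan matches the paper's: derive uniform $L^\infty$-bounds on $\tilde W_\e\rho_t^\e$ from the propagated operator inequality \eqref{eqn:disop}, establish tightness and weak equicontinuity in time, extract a subsequential limit in $C\bigl([0,T];\Probabilities{\R^{2n}}\bigr)$, pass to the limit in the Husimi PDE \eqref{pdehusimi}, identify the limit as the unique $L^\infty_+\bigl([0,T];L^1\cap L^\infty\bigr)$ solution of the Liouville equation via \cite[Theorem 6.1]{AFFGP}, and upgrade to full-sequence convergence. That is exactly the architecture of Section 4 of the paper.

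However, there are two places where your argument has a real gap, not just an unfilled technicality.

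First, the final differentiability claim relies on the assertion that $\bb\cdot\nabla\varphi\in L^\infty$ for $\varphi\in C^\infty_c(\R^{2n})$. This is false whenever $\supp\varphi$ meets $S\times\R^n$: the component $\nabla U_s$ blows up like $1/\textrm{dist}(x,S)^2$ near $S$, so $\bb\cdot\nabla\varphi$ is unbounded, and narrow continuity of $t\mapsto\WW_t\Leb{2n}$ does not yield continuity of $\int\bb\cdot\nabla\varphi\,d W_t$ without an integrable majorant uniform in $t$. The paper handles this by first proving the moment estimate \eqref{eqn:measlim} on $|p|^4+\textrm{dist}(x,S)^{-2}$, then exploiting conservation of the Hamiltonian $\mathcal H=|p|^2/2+U+\|U_b\|_\infty$ under the Liouville dynamics to obtain $\sup_t\int_{\{\mathcal H\geq N\}}\mathcal H^2\,dW_t\to0$; this gives uniform convergence of the truncated integrals $f_N(t)=\int_{A_N}\nabla U_s\cdot\nabla_p\varphi\,dW_t$ and hence continuity. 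Without this step the differentiability assertion is not justified.

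Second, passing to the limit in the drift term $\int\nabla U\cdot\nabla_p\varphi\,\tilde W_\e\rho_t^\e$ is not a matter of $\nabla U\in L^1_{\rm loc}$ plus narrow convergence: narrow convergence of measures only gives convergence against bounded continuous test functions, and neither $\nabla U_s$ (singular on $S$) nor $\nabla U_b$ (only $BV_{\rm loc}$) is continuous. The paper deals with this in two separate pieces. For $U_s$ it first proves the limit equation for test functions supported away from $S\times\R^n$ (Section 4.5), then removes the singularity by testing against $\varphi\chi_k$ with $\chi_k(x)=\chi(k\,\textrm{dist}(x,S))$ and using \eqref{eqn:measlim} together with the structure $\bb=(p,-\nabla U)$ to show the cut-off error is $O(1/k)$ uniformly in $t$. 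For $U_b$ it uses the uniform $L^\infty$-bound \eqref{eqn:esthus}, which allows one to pass the limit through approximations of $\nabla U_b$ by bounded continuous functions in $L^1$ on compact sets. Your proposal treats this as a single direct convergence, which is not available under the stated regularity of $U$. Also, for the record, your justification of the $O(\sqrt\e)$ term via a uniform Hilbert--Schmidt bound on $\tilde\rho_{0,\e}$ is not among the hypotheses and is not what the paper uses; the paper simply observes that $\int W_\e\rho_\e^t\,\nabla_x\cdot[\varphi*\bar G_\e^{(2n)}]$ is uniformly bounded, so the prefactor $\sqrt\e$ kills the term.

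In short: the skeleton is right and you correctly identified the role of \cite[Theorem 6.1]{AFFGP}, but the treatment of the Coulomb singularity in both the limit drift term and the final continuity-in-time of $t\mapsto\int\bb\cdot\nabla\varphi\,\WW_t$ is missing, and in the latter case the argument as stated is incorrect.
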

The rest of the paper will be concerned with the proof of Theorem \ref{thm:mainthm}. However, before proceeding with the proof, we first provide some example and sufficient conditions
for our result to apply.

\section{Examples}
\label{sect:examples}
We will give three types of examples of density matrices satisfying the assumptions of the preceding section, so that Theorem \ref{thm:mainthm} applies.

\subsection{Average of an orthonormal basis}
For simplicity, we set up our first example in the one-dimensional case. In particular,
there is no Coulomb interaction (that is, $U=U_b$), since by assumption Coulomb interactions
are three-dimensional.
We leave to the interested
reader the extension to arbitrary dimension (the only
difference in the case $U_s \neq 0$ appears when checking assumption \eqref{eqn:hypH2}).

Let us consider the orthonormal basis of $L^2(\R)$ given by the (semiclassical) Hermite functions
$$
\psi_j^{(\e)}(x)=\frac{e^{-x^2/2\e}}{\sqrt{2^j j!}(\pi \e)^{1/4}}H_j\left(\frac{x}{\sqrt{\e}}\right), \qquad j \in \N,
$$
where $H_j$'s are the Hermite polynomials, i.e.
$$
H_j(x)=(-1)^je^{x^2}\frac{d^j}{dx^j}e^{-x^2}.
$$
The following holds:

\begin{proposition}\label{herm}
Let $\{\mu_j^{(\e)}\}_{j\in \N}$ be a sequence of positive numbers, and define the
density matrix $\rho_\e$ given by
\[
\rho_\e=\sum_{j \in \N} \mu_j^{(\e)}\langle \psi_j^{(\e)}, \cdot \rangle \psi_j^{(\e)}.
\]
Assume that
\begin{itemize}
\item $0\leq \mu_j^{(\e)}\leq C\e$, $\sum_{j \in \N} \mu_j^{(\e)}=1$;
\item $\e^{2}\sum_{j \in \N} \mu_j^{(\e)} j^{2}\leq C<+\infty$;
\item $w-\lim_{\e \to 0} \sum_{j \in \N}\mu_j^{(\e)}\delta(x^2+p^2-j\e)=\bar{\omega}\LL^{2} \in \Probabilities{\R^2}$.
\end{itemize}
Then \eqref{eqn:hypH2}, \eqref{eqn:disop}, and \eqref{eqn:exlimmeas} hold.
\end{proposition}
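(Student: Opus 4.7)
\medskip
\noindent\textbf{Proof proposal.} The plan is to verify each of \eqref{eqn:disop}, \eqref{eqn:hypH2}, and \eqref{eqn:exlimmeas} separately. The first two bounds follow quickly from the spectral structure of $\tilde\rho_{0,\e}$ and the harmonic-oscillator diagonalization of the $\psi_j^{(\e)}$; the weak convergence \eqref{eqn:exlimmeas} is the main step.

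\medskip
Condition \eqref{eqn:disop} is immediate: since $\{\psi_j^{(\e)}\}_{j\in\N}$ is orthonormal, the operator norm of $\tilde\rho_{0,\e}$ equals $\sup_j\mu_j^{(\e)}\leq C\e$, so $\tilde\rho_{0,\e}/\e\leq C\,\textrm{Id}$. For \eqref{eqn:hypH2}, the starting point is the semiclassical harmonic-oscillator identity
$$
\Bigl(-\tfrac{\e^2}{2}\Delta+\tfrac{x^2}{2}\Bigr)\psi_j^{(\e)}=\e\bigl(j+\tfrac{1}{2}\bigr)\psi_j^{(\e)},
$$
from which $H_\e\psi_j^{(\e)}=\e(j+\tfrac{1}{2})\psi_j^{(\e)}+(U_b-\tfrac{x^2}{2})\psi_j^{(\e)}$. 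The three-term recurrence for $x$ acting on Hermite functions (together with the orthogonality of the $\psi_j^{(\e)}$) gives $\|x^2\psi_j^{(\e)}\|_{L^2}^2=O(\e^2 j^2)$, and since $U_b$ is bounded the triangle inequality produces $\|H_\e\psi_j^{(\e)}\|_{L^2}^2\leq C(\e^2 j^2+1)$. Summing against $\mu_j^{(\e)}$ and invoking $\e^2\sum_j\mu_j^{(\e)}j^2\leq C$ together with $\sum_j\mu_j^{(\e)}=1$ closes \eqref{eqn:hypH2}.

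\medskip
For \eqref{eqn:exlimmeas} I would first establish the closed-form identity for the Husimi function of a Hermite state,
$$
\tilde W_\e\psi_j^{(\e)}(x,p)=\frac{1}{2\pi\e}\,\frac{\alpha^j}{j!}\,e^{-\alpha},\qquad \alpha=\alpha_\e(x,p):=\frac{x^2+p^2}{2\e},
$$
which follows either from a direct Wigner-plus-Gaussian computation or from the coherent-state representation $\tilde W_\e\psi_j^{(\e)}(x,p)=|\langle\phi^\e_{x,p},\psi_j^{(\e)}\rangle|^2/\e$ combined with the standard generating function for Hermite functions. For a test $\varphi\in C_b(\R^2)$, passing to polar coordinates and substituting $u=\alpha_\e(x,p)$ yields
$$
\int_{\R^2}\tilde W_\e\psi_j^{(\e)}\,\varphi\,dx\,dp=\int_0^\infty \frac{u^j}{j!}e^{-u}\,\Phi_\e(u)\,du,
$$
with $\Phi_\e(u):=\frac{1}{2\pi}\int_0^{2\pi}\varphi\bigl(\sqrt{2\e u}\cos\theta,\sqrt{2\e u}\sin\theta\bigr)d\theta$, the angular average of $\varphi$ on the circle of radius $\sqrt{2\e u}$. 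The inner integral is a Poisson average of mean $j$ and variance $j$, so Chebyshev's inequality gives $\int_0^\infty\frac{u^j}{j!}e^{-u}\Phi_\e(u)du=\Phi_\e(j)+O\bigl(\omega_\varphi(\e\sqrt{j})\bigr)$, where $\omega_\varphi$ denotes the modulus of continuity of $\varphi$ on compact sets.

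\medskip
Summing over $j$ with weights $\mu_j^{(\e)}$ and controlling the remainder by Cauchy--Schwarz together with the second-moment bound $\e^2\sum_j\mu_j^{(\e)}j^2\leq C$ produces
$$
\int_{\R^2}\tilde W_\e\rho_{0,\e}\,\varphi\,dx\,dp=\sum_j\mu_j^{(\e)}\Phi_\e(j)+o(1)\qquad(\e\to 0).
$$
Recognizing $\Phi_\e(j)$ (up to a universal normalization factor) as $\int_{\R^2}\varphi\,\delta(x^2+p^2-c\,\e j)\,dx\,dp$ for the appropriate constant $c$ dictated by the Hermite normalization, the third bullet identifies the limit as $\int_{\R^2}\varphi\,\bar\omega\,dx\,dp$, which is precisely \eqref{eqn:exlimmeas}. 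The main obstacle I anticipate is making the Poisson-to-delta concentration quantitative \emph{uniformly} in $j$ along the infinite sum: the truncation at a cut-off $j\leq N(\e)$ with $N(\e)\e\to\infty$ must be controlled by the tail $\sum_{j\geq N(\e)}\mu_j^{(\e)}\leq N(\e)^{-2}\e^{-2}\sum_j\mu_j^{(\e)}(\e j)^2$, which is exactly where the second bullet becomes essential.
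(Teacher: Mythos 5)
Your argument is correct and, for \eqref{eqn:disop} and \eqref{eqn:hypH2}, matches the paper's in substance. For \eqref{eqn:disop} the paper likewise observes that the first bullet is just the operator-norm and trace-one conditions. For \eqref{eqn:hypH2} you add and subtract the harmonic-oscillator potential and use $\bigl(-\tfrac{\e^2}{2}\Delta+\tfrac{x^2}{2}\bigr)\psi_j^{(\e)}=\e(j+\tfrac12)\psi_j^{(\e)}$ together with $\|x^2\psi_j^{(\e)}\|^2=O(\e^2j^2)$, while the paper instead applies the derivative recurrence twice to compute $-\tfrac{\e^2}{2}\tfrac{d^2}{dx^2}\psi_j^{(\e)}$ explicitly as a three-term combination of $\psi_{j-2}^{(\e)},\psi_j^{(\e)},\psi_{j+2}^{(\e)}$; both routes yield the same bound $\|H_\e\psi_j^{(\e)}\|^2\leq C(1+\e^2j^2)$ and then sum against $\mu_j^{(\e)}$ using the first two bullets, so this is only a cosmetic difference.

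For \eqref{eqn:exlimmeas} the paper simply invokes $w\text{-}\lim_{\e\to0,\,j\e\to a}\tilde W_\e\psi_j^{(\e)}=\delta(x^2+p^2-a)$, citing Lions--Paul, whereas you supply a self-contained derivation of essentially that fact via the closed-form Husimi $\tilde W_\e\psi_j^{(\e)}=\tfrac{1}{2\pi\e}\tfrac{\alpha^j}{j!}e^{-\alpha}$, $\alpha=\tfrac{x^2+p^2}{2\e}$, followed by concentration of the radial measure $\tfrac{u^j}{j!}e^{-u}\,du$ around $u\approx j$ and a tail truncation in $j$ controlled by the second bullet. This is a genuinely more explicit route and is sound, but two small remarks. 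First, the radial density $\tfrac{u^j}{j!}e^{-u}$ is a Gamma$(j+1,1)$ density, not a Poisson mass function; the concentration estimate you want is Chebyshev for the Gamma law, and the resulting radial spread in $r=\sqrt{2\e u}$ is of order $\sqrt{\e}$ (not $\e\sqrt{j}$) for $j\e$ bounded, which is what actually makes the error uniform over the truncated range of $j$. Second, your hedging over ``the appropriate constant $c$'' is justified: with the Hermite normalization in the paper, the Husimi of $\psi_j^{(\e)}$ concentrates on the circle $x^2+p^2=2\e j$, so the constant is $c=2$ and there is also a factor $1/\pi$ from the total mass of $\delta(x^2+p^2-a)$; the paper's stated limit $\delta(x^2+p^2-a)$ is thus off by these normalization conventions, and your caution is in fact resolving an inaccuracy in the paper rather than introducing one. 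None of this affects the validity of the conclusion once the third bullet is interpreted consistently.
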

\begin{proof}
The first assumption is equivalent to \eqref{eqn:disop} and the trace-one condition.

Concerning \eqref{eqn:hypH2}, using
the well-know fact that
$$
\e\frac{d}{dx}\psi_j^{(\e)}=\sqrt{\frac{\e}{2}}\left(\sqrt{j}\psi_{j-1}^{(\e)}-\sqrt{j+1}\psi_{j+1}^{(\e)} \right),
$$
by a simple calculation it follows that
\begin{eqnarray*}
H_{\e}\psi_j^{(\e)}& = &-\frac{\e^2}{2}\frac{d^2}{dx^2}\psi_j^{(\e)}+U_b\psi_j^{(\e)}\\
                      & = &-\frac{\e}{4}\left(\sqrt{j(j-1)}\psi_{j-2}^{(\e)}-(2j+1)\psi_{j}^{(\e)}+\sqrt{(j+1)(j+2)}\psi_{j+2}^{(\e)}\right)+U_b\psi_j^{(\e)}.
\end{eqnarray*}
Hence
\begin{eqnarray*}
\|H_{\e}\psi_j^{(\e)}\|^2&\leq &\left[\frac{\e}{4}\left(\sqrt{j(j-1)}+ (2j+1)+\sqrt{(j+1)(j+2)}\right)+\|U_b\|_{\infty}\right]^2\\
&\leq & C(1+\e^2 j^2),
\end{eqnarray*}
and  \eqref{eqn:hypH2} follows
from the first two assumptions.

Finally, the third assumption implies \eqref{eqn:exlimmeas} by noticing that
$$w-\lim_{\e \to 0, j\to \infty, j\e\to a} \tilde W_\e\psi_j^{(\e)}=\delta(x^2+p^2-a)\qquad \forall \,a\geq 0
$$
(see, for instance,  \cite[Exemple III.6]{lionspaul}).
\end{proof}

\subsection{T\"oplitz case}
\label{sect:toplitz}
Let $\phi\in H^2(\R^n;\C)$ with $\int_{\R^n}|\phi(x)|^2\,dx=1$. Given $\epsilon,\varsigma>0$, for any
$w,q\in\R^n$ let $\psi^\e_{w,q}$ be defined by
$$
\psi^\e_{w,q}(x):=\frac{1}{\varsigma^{n/2}}\phi\biggl(\frac{x-q}{\varsigma}
\biggr)e^{i\frac{w\cdot x}\e}.
$$
Then, using Plancherel theorem, one can easily check that the identity
\begin{equation}\label{eq:Linfty bound}
\frac{1}{\e^n}\int_{\R^{2n}}|\psi^\e_{w,q}\rangle\langle \psi^\e_{w,q}|\,dw\,dq=
 (2\pi)^n{\rm Id}
\end{equation}
holds, where $|\psi\rangle\langle \psi|$ is the Dirac notation for the orthogonal projection onto a normalized vector $\psi \in L^2(\R^n)$.
Thanks to \eqref{eq:Linfty bound} and the fact that orthogonal projectors are nonnegative operators, we immediately obtain the
following important estimate: for every nonnegative bounded function
$\chi_\e:\R^{2n}\to \R$, it holds
\begin{equation}\label{linfini}
\frac{1}{\e^n}\int_{\R^{2n}}\chi_\e(w,q)|\psi^\e_{w,q}\rangle\langle \psi^\e_{w,q}|\,dw\,dq\leq
\|\chi_\e\|_{\infty}(2\pi)^n{\rm Id}.
\end{equation}
Set now
$$
\tilde{\rho}_{0,\e}:=\int_{\R^{2n}}\chi_\e(w,q)|\psi^\e_{w,q}\rangle\langle \psi^\e_{w,q}|\,dw\,dq, \qquad \e \in (0,1),
$$
where $\{\chi_\e\}_{\e\in (0,1)}$ is a family of nonnegative bounded functions such that $\int_{\R^{2n}}\chi_\e(w,q)\,dw\,dq=1$,
and let $S$ be the singular set of $U_s$ as defined in \eqref{defn:S} below.
\begin{proposition}\label{top}
 Let $\varsigma=\varsigma(\e)=\e^\a$ with $\a \in (0,1)$, and assume that
\begin{itemize}
\item $\sup_{\e\in (0,1)} \|\chi_\e\|_{\infty}<+\infty$.
\item $w-\lim_{\e \to 0} \chi_\e \Leb{2n} =\bar{\omega} \Leb{2n}\in \Probabilities{\R^{2n}}$
\item
$\int_{\R^{2n}}\chi_\e(w,q)\left( |w|^4+ \frac{1}{{\rm dist}(q,S)^2}\right)\,dw\,dq\leq C<+\infty$.
\end{itemize}
Then \eqref{eqn:hypH2}, \eqref{eqn:disop}, and \eqref{eqn:exlimmeas} hold for the family of initial data $\{\tilde{\rho}_{0,\e}\}_{\e \in (0,1)}$.
\end{proposition}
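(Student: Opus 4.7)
The plan is to verify the three conditions \eqref{eqn:hypH2}, \eqref{eqn:disop} and \eqref{eqn:exlimmeas} separately, in increasing order of difficulty. Condition \eqref{eqn:disop} is immediate: applying \eqref{linfini} with the given $\chi_\e$ in the role of the nonnegative bounded weight, and using the first hypothesis $\sup_\e\|\chi_\e\|_\infty<+\infty$, yields $\tilde{\rho}_{0,\e}/\e^n \leq C\,\mathrm{Id}$. The normalization $\tr(\tilde{\rho}_{0,\e})=1$, positivity and self-adjointness are clear from the representation of $\tilde{\rho}_{0,\e}$ as a convex combination of rank-one orthogonal projectors together with $\int\chi_\e = 1$.

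For \eqref{eqn:exlimmeas}, I would use the linearity of the Husimi transform to write
\[
\tilde W_\e\rho_{0,\e}(x,p) = \int_{\R^{2n}}\chi_\e(w,q)\,\tilde W_\e\psi^\e_{w,q}(x,p)\,dw\,dq.
\]
Each $\tilde W_\e\psi^\e_{w,q}$ is a probability density on $\R^{2n}$ (Appendix) that concentrates weakly at $(q,w)$ as $\e\to 0$, because the position spread of $\psi^\e_{w,q}$ is of order $\varsigma=\e^{\a}$ and its momentum spread of order $\e/\varsigma=\e^{1-\a}$, both vanishing when $\a\in(0,1)$. Using the squared-overlap representation of the Husimi function and rescaling, one checks that for every $\varphi\in C_b(\R^{2n})$ the map $(w,q)\mapsto I_\e(w,q):=\int\varphi\,\tilde W_\e\psi^\e_{w,q}\,dx\,dp$ is uniformly bounded by $\|\varphi\|_\infty$ and converges locally uniformly to $\varphi(q,w)$. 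Since the family $\chi_\e\Leb{2n}$ is tight by Prokhorov (as a weakly convergent sequence of probability measures), a Fubini plus dominated convergence argument then yields $\int\varphi\,\tilde W_\e\rho_{0,\e}\,dx\,dp \to \int\varphi(q,w)\bar\omega(w,q)\,dw\,dq$, which is exactly \eqref{eqn:exlimmeas}.

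For \eqref{eqn:hypH2}, I would first rewrite, using the trace cyclic property and linearity,
\[
\sum_{j\in\N}\mu_j^{(\e)}\|H_\e\phi_j^{(\e)}\|^2 = \tr\bigl(H_\e\tilde{\rho}_{0,\e}H_\e\bigr) = \int_{\R^{2n}}\chi_\e(w,q)\,\|H_\e\psi^\e_{w,q}\|^2\,dw\,dq,
\]
so it suffices to establish the pointwise estimate $\|H_\e\psi^\e_{w,q}\|^2 \leq C\bigl(1 + |w|^4 + 1/\mathrm{dist}(q,S)^2\bigr)$ uniform in $\e$, since the third hypothesis then makes the right-hand side integrable against $\chi_\e$. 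Splitting $H_\e = -\tfrac{\e^2}{2}\Delta + U_b + U_s$, the bounded part contributes $\|U_b\|_\infty^2$; the kinetic part is a direct computation, since differentiating $\psi^\e_{w,q}$ and using $\phi\in H^2$ together with the boundedness of $\e^2/\varsigma^2 = \e^{2(1-\a)}$ and $\e/\varsigma = \e^{1-\a}$ (as $\a<1$) gives $\bigl\|-\tfrac{\e^2}{2}\Delta\psi^\e_{w,q}\bigr\|_{L^2} \leq C(1+|w|^2)$.

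The main obstacle is the Coulomb part $\|U_s\psi^\e_{w,q}\|^2 = \int|U_s(q+\varsigma y)|^2|\phi(y)|^2\,dy$. The plan is to split the $y$-integration at the scale $|y| = d(q)/(2\varsigma)$, with $d(q):=\mathrm{dist}(q,S)$: on $\varsigma|y|\leq d(q)/2$ each pair-separation $|q_i - q_j + \varsigma(y_i - y_j)|$ stays at least half of $|q_i-q_j|$, so $|U_s(q+\varsigma y)| \leq C/d(q)$ pointwise and this region contributes at most $C/d(q)^2$. On the complementary tail, for each pair $(i,j)$ I would insert a smooth radial cutoff at scale $R=d(q)/(2\varsigma)$ and apply Hardy's inequality in the three-dimensional variable $y_i - y_j$ (pointwise in the remaining coordinates), producing a bound of the form $C/d(q)^2 + C\varsigma^{-2}\int_{|y|>R}|\nabla\phi|^2\,dy$. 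Once this is integrated against $\chi_\e$, the spurious $\varsigma^{-2}$ factor in the last piece is absorbed by a Fubini/Markov argument using the $1/d(q)^2$ integrability from the third hypothesis, which shows that the measure of $\{d(q)<2\varsigma|y|\}$ under $\chi_\e$ is at most $C\varsigma^2|y|^2$. The careful verification of this cancellation between the Hardy tail and the $L^2$ control of $\chi_\e/d(q)^2$, and the matching of the tail decay of $|\nabla\phi|^2$ with the $H^2$ regularity of $\phi$, is where the core technical effort of the proof resides.
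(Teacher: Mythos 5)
Your plan coincides with the paper's: \eqref{eqn:disop} from \eqref{linfini} and the $L^\infty$ bound on $\chi_\e$; \eqref{eqn:exlimmeas} from the concentration $w\text{-}\lim_{\e\to 0}\tilde W_\e\psi^\e_{w,q}\,\Leb{2n}=\delta$ at the phase-space point determined by $(w,q)$ (the paper cites Lions--Paul Exemple III.3) combined with $w\text{-}\lim\chi_\e\Leb{2n}=\bar\omega\Leb{2n}$; and \eqref{eqn:hypH2} rewritten, via the trace identity, as $\int\chi_\e\|H_\e\psi^\e_{w,q}\|^2\,dw\,dq$, split into a kinetic part controlled by $\e^{1-\a}\leq 1$, $\phi\in H^2$ and the $|w|^4$ moment, a bounded part $\|U_b\|_\infty^2$, and a Coulomb part. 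Your Fubini/dominated-convergence detail for \eqref{eqn:exlimmeas} and your kinetic computation are both fine.

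Where you go beyond the paper is the Coulomb piece, which the paper compresses into ``a simple estimate analogous to the one in Section~\ref{sect:sing}; we leave the details to the interested reader.'' Your near/far splitting at $|y|=d(q)/(2\varsigma)$ followed by Hardy on the tail and a Markov bound $\nu_\e(\{d(q)<2\varsigma|y|\})\lesssim\varsigma^2|y|^2$ is the natural thing to try, and you honestly flag the ``matching of the tail decay of $|\nabla\phi|^2$ with the $H^2$ regularity'' as the crux. Carried through, however, the plan leaves you with the weighted integral $\int_{\R^n}|y|^2|\nabla\phi(y)|^2\,dy$, and this is \emph{not} finite for a general $\phi\in H^2(\R^n)$: taking $\phi$ to be a sum of unit-width bumps on annuli $\{|y|\sim k\}$ with amplitudes $a_k$ chosen so that $a_k^2k^{n-1}\sim (k\log^2 k)^{-1}$ gives $\phi\in H^2$ while $\sum a_k^2k^{n+1}=\infty$. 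So the argument as outlined does not close under the stated hypotheses; one either needs an additional decay assumption on $\phi$ (e.g.\ $\langle y\rangle\nabla\phi\in L^2$, or $\phi$ Schwartz), or a sharper use of the codimension-three structure of $S$ (local integrability of $1/\mathrm{dist}(\cdot,S)^2$ together with $\|\chi_\e\|_\infty\leq C$, along the lines of the maximal-function estimate implicit in Section~\ref{sect:sing}) to avoid the Hardy tail term altogether. A smaller point: you open this part by saying ``it suffices to establish the \emph{pointwise} estimate $\|H_\e\psi^\e_{w,q}\|^2\leq C(1+|w|^4+1/d(q)^2)$,'' but your own subsequent analysis (the Hardy tail carries an uncompensated $\varsigma^{-2}$) shows that no such $\e$-uniform pointwise bound holds; the estimate is intrinsically an integrated one against $\chi_\e$, and the proof should be framed that way from the outset.
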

\begin{proof}
\eqref{eqn:disop} follows from the first assumption and \eqref{linfini}.

Since $\varsigma=\e^\a$ with $\a \in (0,1)$ we have that for all $(w,q)\in \R^{2n}$
$$
w-\lim_{\e \to 0} \tilde W_\e \psi^\e_{w,q}\Leb{2n}=\delta_{(w,q)},
$$
see \cite[Exemple III.3]{lionspaul},
and so \eqref{eqn:exlimmeas} follows from our second assumption.

To show that the third assumption implies \eqref{eqn:hypH2}, we notice that in this case \eqref{eqn:hypH2} can be written as follows
\begin{equation}\label{eqn:weaker}
\int_{\R^{2n}}\chi_\e(w,q)\langle H_\e \psi^\e_{w,q},H_{\e}\psi^\e_{w,q}\rangle\,dw\,dq < +\infty.
\end{equation}
Since $\a < 1$, and $\phi\in H^2(\R^n;\C)$,
by a simple computation we get
\begin{align*}
\langle H_{\e}\psi^\e_{w,q},H_{\e}\psi^\e_{w,q}\rangle
&\leq \frac{\e^4}{2} \langle \Delta_x \psi^\e_{w,q},\Delta_x\psi^\e_{w,q}\rangle + 2 \langle U \psi^\e_{w,q},U \psi^\e_{w,q}\rangle\\
&\leq C\bigl(1+|w|^4\bigr)+ C \int_{\R^n} U(x)^2 \frac{1}{\varsigma^{n}}\phi^2\biggl(\frac{x-q}{\varsigma}
\biggr)\,dx.
\end{align*}
Since $U_b$ is bounded, $|U_s(q)| \leq C/{\rm dist}(q,S)$, and $\int_{\R^n}|\phi(x)|^2\,dx=1$,
a simple estimate analogous to the one in Section \ref{sect:sing} shows that \eqref{eqn:hypH2} holds. We leave the details to the interested reader.
\end{proof}.

\subsection{Conditions on the Wigner function}
Here we consider a general family of density matrices $\{\tilde{\rho}_{0,\e}\}_{\e \in(0,1)}$ which satisfies the tightness conditions
\eqref{eqn:tightness} and \eqref{eqn:tightness2} (so that \eqref{eqn:exlimmeas} is satisfied up to
the extraction of a subsequence). In the next proposition we show some simple sufficient conditions on the Wigner functions $\{W_{\e}\rho_{0,\e}\}_{\e \in(0,1)}$ in order to ensure the validity of assumptions (\ref{eqn:hypH2}) and (\ref{eqn:disop}).
\begin{proposition}\label{cv}
Assume that
\begin{itemize}
\item $\max_{\vert\alpha\vert,\vert\beta\vert\leq [\frac n2]+1}\
 \|\partial_x^\alpha\partial_p^\beta W_\e\rho_{0,\e}\|_{\infty}\leq C<+\infty$,
\item $\int_{\R^{2n}}\left(\frac{|p|^4}{4}+U^2(x)+|p|^2U(x)-\frac{n\e^2}{2}\Delta U(x)\right)W_\e\rho_{0,\e}(x,p)\,dx\,dp\leq C<+\infty$.
\end{itemize}
Then \eqref{eqn:hypH2} and  \eqref{eqn:disop} hold.
\end{proposition}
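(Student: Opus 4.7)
The plan is to use the Wigner--Weyl correspondence to translate both desired operator-theoretic conclusions into integral statements about $W_\e\rho_{0,\e}$ and its derivatives, which are then controlled by the two hypotheses of the proposition.

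For \eqref{eqn:disop}, I would exploit the inversion identity $\sigma^W_\e(\tilde{\rho}_{0,\e}) = (2\pi\e)^n W_\e\rho_{0,\e}$, so that $\e^{-n}\tilde{\rho}_{0,\e}$ coincides, up to the dimensional constant $(2\pi)^n$, with the semiclassical Weyl quantization of $W_\e\rho_{0,\e}$. A Calderon--Vaillancourt type theorem for the semiclassical Weyl calculus, in its sharp form due to Hwang, bounds the operator norm of $\textrm{Op}^W_\e(a)$ by finitely many mixed partial derivatives of $a$, at most of order $[n/2]+1$ in each of $x$ and $p$, with a constant independent of $\e$. The first hypothesis of Proposition \ref{cv} provides exactly these uniform bounds on $W_\e\rho_{0,\e}$, hence yields $\|\e^{-n}\tilde{\rho}_{0,\e}\|_{B(L^2)}\leq C$, which is \eqref{eqn:disop}.

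For \eqref{eqn:hypH2}, I would recast the sum as a trace,
$$
\sum_{j\in\N}\mu_j^{(\e)}\|H_\e\phi_j^{(\e)}\|^2 = \textrm{tr}(H_\e^2\tilde{\rho}_{0,\e}),
$$
and then apply the Wigner--Weyl duality $\textrm{tr}(A\tilde{\rho}) = \int\sigma^W_\e(A)\,W_\e\rho\,dx\,dp$ to reduce to an integral. The remaining task is to compute the Weyl symbol of $H_\e^2$ by expanding the Moyal product $(|p|^2/2+U)\#_\e(|p|^2/2+U)$. Since $|p|^2/2$ is a polynomial of degree two in $p$, the expansion terminates at second order, and a direct computation produces exactly the integrand appearing in the second hypothesis, with the $\Delta U$ term arising as the second-order Moyal correction between the kinetic and potential symbols. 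The hypothesis then bounds the integral uniformly in $\e$, giving \eqref{eqn:hypH2}.

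The main obstacle is the low regularity of $U$: the Coulomb part $U_s$ has only a distributional Laplacian, with $\delta$-concentrations on the singular set $S$, and $\nabla U_b \in BV_{\textrm{loc}}$ makes $\Delta U_b$ only a signed Radon measure. Both the Moyal symbol identity and the Wigner--Weyl trace formula must therefore be justified under this minimal regularity, which I would handle by approximating $U$ with smooth mollifications $U_\delta$, establishing the two identities for the approximants, and passing to the limit; the implicit absolute integrability of $\Delta U$ against $W_\e\rho_{0,\e}$ built into the second hypothesis is what makes this limiting procedure uniform in $\e$. Domain-theoretic issues for $H_\e^2$ (i.e.\ ensuring that the spectral eigenfunctions $\phi_j^{(\e)}$ lie in the appropriate form domain) are handled via the self-adjoint realization of $H_\e$ and a standard density argument. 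Once these technical points are settled, the remaining argument is the clean pseudodifferential computation described above.
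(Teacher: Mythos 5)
Your proposal follows essentially the same route as the paper: \eqref{eqn:disop} is deduced from the first hypothesis via the identity $\sigma_\e(\tilde\rho_{0,\e})=(2\pi\e)^n W_\e\rho_{0,\e}$ and a Calder\'on--Vaillancourt bound, and \eqref{eqn:hypH2} is reduced to $\sup_\e\textrm{tr}(H_\e^2\tilde\rho_{0,\e})<\infty$, computed through the trace formula $\textrm{tr}(A\tilde\rho)=\int\sigma_\e(A)\,W_\e\rho\,dx\,dp$ together with a Moyal expansion that terminates at second order because $|p|^2$ is quadratic, yielding exactly the integrand in the second hypothesis. The only place where you go beyond the paper is in explicitly flagging and treating (by mollification of $U$) the low-regularity issue behind the $\Delta U$ term; the paper leaves this implicit, so your version is if anything the more careful of the two.
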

\begin{proof}
Let us recall first that the Weyl symbol of an operator $\tilde{\rho}$ of integral kernel $\rho(x,y)$ is, by definition, given by
$$
\sigma_\e(\tilde{\rho})(x,p):=\int_{\R^n} \rho(x+\frac y2,x-\frac y 2)e^{-iy\cdot p/\e}dy,
$$ 
that is equal to $(2\pi\e)^n W_\e\rho$. Moreover, using \eqref{trace condition kernel}
and \eqref{eqn:margWig}, it holds
\begin{equation}
\label{eq:trace}
\textrm{tr}(\tilde\rho)=\int_{\R^{2n}} W_\e \rho(x,p)\,dx\,dp
\end{equation}

Now, we remark that the first assumption gives \eqref{eqn:disop} using Calder\'on-Vaillancourt Theorem \cite{boulkh}.

Concerning \eqref{eqn:hypH2}, we will prove that
$$
\sup_{\e \in (0,1)}\textrm{tr}(H_{\e}^2\tilde{\rho}_{0,\e}) < +\infty
$$
(as observed in Section \ref{sect:assumptions and theorem}, this condition is slightly stronger than \eqref{eqn:hypH2}).
To  this aim,
we first note that
\begin{equation}\label{carre}
H_{\e}^2=\frac{\e^4}{4}\Delta^2+U^2-\frac{\e^2}{2}\Delta U-\frac{\e^2}{2}U\Delta.
\end{equation}
Moreover, let us observe that if $\tilde\rho_1$ and $\tilde\rho_2$ have kernels $\rho_1$
and $\rho_2$ respectively, then the kernel associated to the operator $\tilde\rho_1\tilde\rho_2$
is given by $\int \rho_1(\cdot,z)\rho_2(z,\cdot)\,dz$. By this fact
and \eqref{eq:trace}, a simple computation shows that the identity
\[
\textrm{tr}(A{\rho}_{\e})=\int_{\R^{2n}} \sigma_\e(A)(x,p)W_\e\rho_{0,\e}(x,p)\,dx\,dp
\]
holds for any ``suitable'' operator $A$ (here $\sigma_\e(A)$ is the Weyl symbol of $A$).
Hence, in our case,
\[
\textrm{tr}(H_{\e}^2\tilde{\rho}_{\e})=\int_{\R^{2n}} \sigma_\e(H_{\e}^2)(x,p)W_\e\rho_{0,\e}(x,p)\,dx\,dp.
\]
We claim that the Weyl symbol of $H_{\e}^2$ is
$$
\sigma_\e(H_{\e}^2)(x,p)=\frac{|p|^4}{4}+U^2(x)+|p|^2U(x)-\frac{n\e^2}{2}\Delta U(x).
$$
Indeed, let $f(x,p):=|p|^2=\sigma_\e(-\e^2 \Delta)(x,p)$ and $g(x,p):=U(x)=\sigma_\e(U)(x,p)$.
Then, using Moyal expansion,
\begin{eqnarray*}
\sigma_\e(H_{\e}^2)(x,p) & = & \sigma_\e\left(\frac{\e^4}{4}\Delta^2+U^2-\frac{\e^2}{2}\Delta U-\frac{\e^2}{2}U\Delta\right)(x,p) \\
                         & = & \frac{f(x,p)^2}{4}+g(x,p)^2+\frac{1}{2}f\sharp g (x,p)+\frac{1}{2}g\sharp f (x,p),
\end{eqnarray*}
where by definition
$$
h_1\sharp h_2 (x,p):=\left.e^{i\frac\e 2(\partial_x\partial_{p'}-\partial_p\partial_{x'})}h_1(x,p)h_2(x',p') \right|_{x'=x,p'=p}.
$$
In our case, in the expansion of the exponential
$$
e^{i\frac\e 2(\partial_x\partial_{p'}-\partial_p\partial_{x'})}=\sum_{j \in \N} \frac{1}{j!} \left( i\frac\e 2(\partial_x\partial_{p'}-\partial_p\partial_{x'})\right)^j
$$
we can stop at the second order term, since $f(x,p)=|p|^2$. Therefore
$$
f\sharp g (x,p)=|p|^2U(x)-i\e p\cdot \nabla U(x)-\frac{n\e^2}{2}\Delta U(x),
$$
and
$$
g\sharp f (x,p)=|p|^2U(x)+i\e p\cdot \nabla U(x)-\frac{n\e^2}{2}\Delta U(x).
$$
This proves the claim and conclude the proof of the proposition.

%
%
%

\end{proof}

\section{Proof of Theorem \ref{thm:mainthm}}
The proof of the theorem is split into several steps: first we show some basic estimates on the solutions, and we prove that the family $\tilde W_\e \rho^\e_t$
is tight in space and uniformly weakly continuous in time (this is the compactness part).
Then we show that $\tilde W_\e \rho^\e_t$ solves the Liouville equation
(away from the singular set of the Coulomb potential) with an error term which converges to zero as $\e\to 0$. Combining this fact with some uniform decay estimate for $\tilde W_\e \rho^\e_t$
away from the singularity, we finally prove that any limit point is bounded and solves the Liouville equation.
By the uniqueness of solution to the Liouville equation in the function space $L^{\infty}_+([0,T];L^{1}(\R^{2n})\cap L^{\infty}(\R^{2n}))$,
we conclude the desired result.

Let us observe that some of our estimates can be found \cite{amfrja} and \cite{AFFGP}.
However, the setting and the notation there are
slightly different, and in some cases one would have to recheck the
details of the proofs in \cite{amfrja,AFFGP}
to verify that everything works also in our case. Hence,
for sake of completeness and in order to make this paper more accessible, we have decided to include all the details.

\subsection{Basic estimates}
\subsubsection{Conserved quantities}
The spectral decomposition of $\tilde{\rho}^{\e}_t$ is
$$
\tilde{\rho}^{\e}_t=\sum_{j \in\N} \mu_j^{(\e)} \langle \phi_{j,t}^{(\e)}, \cdot \rangle \phi_{j,t}^{(\e)},
$$
where $\phi_{j,t}^{(\e)}=e^{-itH_{\e}/\e}\phi_{j}^{(\e)}$ solves \eqref{eq:Sch}.
By standard results on the unitary propagator $e^{-itH_{\e}/\e}$ follows that
\begin{equation}\label{eqn:meanH}
\sum_{j \in \N} \mu_j^{(\e)} \langle \phi_{j,t}^{(\e)}, H_{\e}  \phi_{j,t}^{(\e)}\rangle=\sum_{j \in \N} \mu_j^{(\e)} \langle \phi_{j}^{(\e)}, H_{\e}  \phi_{j}^{(\e)}\rangle
\end{equation}
and
\begin{equation}\label{meanH2}
\sum_{j \in \N} \mu_j^{(\e)} \| H_{\e}\phi_{j,t}^{(\e)}\|^2=\sum_{j \in \N} \mu_j^{(\e)}  \|H_{\e} \phi_{j}^{(\e)}\|^2
\end{equation}
for all $t \in \R$ and $\e\in(0,1)$. Therefore, using (\ref{eqn:hypH2}) we have
\begin{equation}\label{eqn:supemeanH}
\sup_{\e\in(0,1)} \sup_{ t \in \R} \sum_{j \in \N} \mu_j^{(\e)} \langle \phi_{j,t}^{(\e)}, H_{\e}  \phi_{j,t}^{(\e)}\rangle < +\infty,
\end{equation}
\begin{equation}\label{eqn:supemeanH2}
\sup_{\e\in(0,1)} \sup_{ t \in \R} \sum_{j \in \N} \mu_j^{(\e)} \|H_{\e} \phi_{j,t}^{(\e)}\|^2 < +\infty.
\end{equation}

\subsubsection{A priori estimates}
From (\ref{eqn:meanH}), (\ref{meanH2}) and from the fact that $U_s >0$ and $U_b \in L^{\infty}(\R^n)$, follows that for all $\e\in(0,1)$
\begin{equation}
\sup_{t \in \R} \int_{\R^n} U_s^2(x)\rho_{t}^{\e}(x,x) \; dx \leq \sum_{j \in \N} \mu_j^{(\e)} \|H_{\e} \phi_{j}^{(\e)}\|^2 +2 \|U_b\|_{\infty}\left( \sum_{j \in \N} \mu_j^{(\e)} \langle \phi_{j}^{(\e)}, H_{\e} \phi_{j}^{(\e)}\rangle +\|U_b\|_{\infty}\right)
\end{equation}
and
\begin{equation}
\sup_{t \in \R} \frac{1}{2}\sum_{j \in \N} \mu_j^{(\e)}\int_{\R^n} |\e \nabla \phi_{j,y}^{(\e)}(x)|^2 \; dx \leq \sum_{j \in \N} \mu_j^{(\e)}
\langle \phi_j^{(\e) }H_{\e} \phi_{j}^{(\e)} \rangle + \|U_b\|_{\infty}.
\end{equation}
Hence, by (\ref{eqn:supemeanH}) and (\ref{eqn:supemeanH2}) we obtain
\begin{equation}\label{defn:C1}
\sup_{\e\in(0,1)} \sup_{t \in \R} \int_{\R^n} U_s^2(x)\rho_{t}^{\e}(x,x) \; dx \leq C_1
\end{equation}
and
\begin{equation}\label{eqn:epsgrad2}
\sup_{\e\in(0,1)} \sup_{t \in \R}\sum_{j \in \N} \mu_{j}^{(\e)}\int_{\R^n} |\e \nabla \phi_{j,t}^{(\e)}(x)|^2 \; dx \leq C_2.
\end{equation}


\subsubsection{Propagation of (\ref{eqn:disop}) and consequences}
Observe that, by unitarity of  $e^{itH_{\e}/\e}$, we have, for all $t\in \R$,
\begin{equation}\label{eqn:disopt}
\frac{1}{\e^n} \tilde{\rho}_t^{\e} \leq C \,\textrm{Id}.
\end{equation}
Hence, since
\begin{equation}\label{eqn:limHus}
\tilde{W}_{\e}\rho_{t}^{\e}(y,p)=\frac{1}{(2\pi)^n} \langle \phi^{\e}_{y,p}, \tilde{\rho}_{t}^{\e}\phi^{\e}_{y,p} \rangle,
\end{equation}
(see Appendix), using (\ref{eqn:disopt}) we have
\begin{equation}\label{eqn:esthus}
 \sup_{ \e\in(0,1)} \sup_{t\in \R}\|\tilde{W}_{\e}\rho_{t}^{\e}\|_{\infty} \leq \frac{C \e^n}{(2\pi)^n}\|\phi^{\e}_{y,p}\|^2=\frac{C}{(2\pi)^n}
\end{equation}
(because $\|\phi^{\e}_{y,p}\|=\e^{-n/2}$).
Now, define for all $x,y \in \R^n$ and $\e, \lambda > 0$
$$
g_{\e,\lambda,y}(x)=(\sqrt{2}\e)^{n/2}(\pi \lambda)^{n/4} G_{\lambda \e^2}^{(n)}(x-y).
$$
Observe that
\begin{eqnarray}
\frac{1}{\e^n}\langle g_{\e,\lambda,y}, \tilde{\rho}_t^{\e} g_{\e,\lambda,y} \rangle & = & \frac{1}{\e^n} \sum_{j \in\N} \mu_j^{(\e)}|\langle g_{\e,\lambda,y}, \phi_{j,t}^{(\e))} |^2 \nonumber\\
                                                                                     & = & \frac{1}{\e^n} \sum_{j \in\N} \mu_j^{(\e)}|(\sqrt{2}\e)^{n/2}(\pi \lambda)^{n/4}\phi_{j,t}^{(\e))} \ast G_{\lambda \e^2}^{(n)} (y)  |^2 \nonumber\\
                                                                                     & = & 2^{n/2}(\pi \lambda)^{n/2}\sum_{j \in\N} \mu_j^{(\e)}|\phi_{j,t}^{(\e))} \ast G_{\lambda \e^2}^{(n)} (y)  |^2, \nonumber
\end{eqnarray}
therefore, since $\|g_{\e,\lambda,y}\|=1$, by (\ref{eqn:disopt}) we have that
\begin{equation}
2^{n/2}(\pi \lambda)^{n/2}\sum_{j \in\N} \mu_j^{(\e)}|\phi_{j,t}^{(\e))} \ast G_{\lambda \e^2}^{(n)} (y)  |^2 \leq C.
\end{equation}
So
\begin{equation}\label{eqn:eps2}
\sup_{\e \in (0,1)} \sup_{t \in [0,T]}\sup_{y \in \R^n} \; \sum_{j \in\N} \mu_j^{(\e)}|\phi_{j,t}^{(\e)} \ast G_{\lambda \e^2}^{(n)} (y)  |^2 \leq \frac{C}{\lambda^{n/2}}.
\end{equation}

\subsection{Tightness in space}
Define $C_{R}^{(k)}=\{y=(y_1,\dots,y_k) \in \R^k : |y_j| \leq R, j=1, \dots, k\}$. We want to prove that
\begin{equation}\label{tightnesshus}
\lim_{R \to +\infty} \sup_{\e \in(0,1)} \sup_{t \in [0,T]} \int_{\R^{2n} \setminus C_R^{(2n)}} \tilde{W}_{\e}\rho_{\e}^t(x,p) \,dx \,dp =0.
\end{equation}
Observe that for all $R>0$
\begin{eqnarray*}
\sup_{\e \in(0,1)} \sup_{t \in [0,T]} \int_{\R^{2n} \setminus C_R^{(2n)}} \tilde{W}_{\e}\rho_{\e}^t(x,p) \,dx \,dp  & \leq & \sup_{\e \in(0,1)} \sup_{t \in [0,T]} \frac{1}{2}\left[ \int_{(\R^n \setminus C_R^{(n)})\times \R^n} \tilde{W}_{\e}\rho_{\e}^t(x,p) \,dx \,dp \right. \\
                                                                                                             &      & \left. + \int_{\R^n \times (\R^n \setminus C_R^{(n)})} \tilde{W}_{\e}\rho_{\e}^t(x,p) \,dx \,dp \right],
\end{eqnarray*}
so we can check the tightness property separately for the first and the second marginals of $\tilde{W}_{\e}\rho_{\e}^t$.
From (\ref{eqn:exlimmeas}) follows immediately that the family $\{\tilde{W}_{\e}\rho_{\e,0}\Leb{2n}\}_{\e \in (0,1)}$ is tight
(because, by Prokhorov's Theorem, a family of nonnegative finite measures on $\R^{2n}$ is tight if and only if it is relatively compact in the duality with $C_b(\R^{2n})$). Therefore
\begin{equation}\label{eqn:husmargI}
\lim_{R \to +\infty} \int_{(\R^{n} \setminus C_R^{(n)}) \times \R^{n}} \tilde{W}_{\e}\rho_{\e,0}(x,p) \,dx \,dp=0.
\end{equation}
Let $\chi \in C(\R^n)$, $0\leq \chi \leq 1$ such that $\chi(x)=0$ if $|x|<1/2$ and $\chi(x)=1$ if $|x|>1$, and define
$\chi_R(x):=\chi(x/R)$. Observe that
$\|\nabla \chi_R\|_{\infty} \leq C'/R$ and $\|\Delta \chi_R\|_{\infty} \leq C'/R^2$.
We define the following operator:
$$
A_R^{(\e)}\psi(x)=\chi_{R}\ast G_{\e}^{(n)}(x)\psi(x), \qquad \psi \in L^2(\R^n).
$$
Observe that
$$
\frac{d}{dt}\textrm{tr}(A_R^{(\e)}\tilde{\rho}_{\e}^t)=-\frac{i}{\e}\textrm{tr}([A_R^{(\e)},H_{\e}]\tilde{\rho}_{\e}^t)
$$
and that $[A_R^{(\e)},H_{\e}]=\e^2\bigl(\Delta(\chi_{R}\ast G_{\e}^{(n)})/2+\nabla(\chi_{R}\ast G_{\e}^{(n)})\cdot \nabla\bigr)$. So, using \eqref{eqn:epsgrad2},
\begin{eqnarray*}
\frac{d}{dt} \textrm{tr}(A_R^{(\e)}\tilde{\rho}_{\e}^t) & = & \frac{d}{dt} \int_{\R^{2n}} \chi_{R}(x)\tilde{W}_{\e}\rho_{\e}^t(x,p) \,dx \,dp \\
                                                       & \leq & \frac{C'\e}{R^2}+\frac{C'\sqrt{C_2}}{R} \leq \frac{C'}{R^2}+\frac{C'\sqrt{C_2}}{R},
\end{eqnarray*}
which gives
\begin{eqnarray*}
\int_{(\R^n \setminus C_{2R}^{(n)})\times \R^n} \tilde{W}_{\e}\rho_{\e}^t(x,p) \,dx \,dp & \leq & \int_{\R^{2n}} \chi_{R}(x)\tilde{W}_{\e}\rho_{\e}^t(x,p) \,dx \,dp \\
                                                                                        & \leq & \int_{\R^{2n}} \chi_{R}(x)\tilde{W}_{\e}\rho_{0,\e}(x,p) \,dx \,dp +\biggl[\frac{C'}{R^2}+\frac{C'\sqrt{C_2}}{R}\biggr]T \\
                                                                                        & \leq & \int_{(\R^n \setminus C_{R}^{(n)})\times \R^n} \tilde{W}_{\e}\rho_{0,\e}(x,p) \,dx \,dp + \biggl[\frac{C'}{R^2}+\frac{C'\sqrt{C_2}}{R}\biggr]T.
\end{eqnarray*}
Therefore, using (\ref{eqn:husmargI}), we get
\begin{equation}
\lim_{R \to +\infty }\sup_{\e \in (0,1)} \sup_{t \in [0,T]}\int_{(\R^n \setminus C_{2R}^{(n)})\times \R^n} \tilde{W}_{\e}\rho_{\e}^t(x,p) \,dx \,dp =0,
\end{equation}
as desired.
For the second marginal we observe first that
\begin{equation}\label{eqn:2marghus}
\int_{\R^{2n}}|p|^2 \tilde{W}_{\e}\rho_{\e}^t(x,p) \,dx \,dp = \int_{\R^{2n}}|p|^2 W_{\e}\rho_{\e}^t(x,p) \,dx \,dp +\frac{n\e}{2}
\end{equation}
and
\begin{eqnarray*}
\int_{\R^{2n}}|p|^2W_{\e}\rho_{\e}^t(x,p) \,dx \,dp & = & \sum_{j \in \N} \mu_j^{(\e)}\int_{\R^n}\left| \frac{1}{(2\pi \e)^{\e/2}}\hat{\phi}_{j,t}^{(\e)}\left(\frac{p}{\e}\right)\right|^2|p|^2 \; dp \nonumber \\
                                                   & = & \sum_{j \in \N} \mu_j^{(\e)}\int_{\R^n} |\e \nabla \phi_{j,t}^{(\e)}(x)|^2\; dx
\end{eqnarray*}
therefore, using (\ref{eqn:2marghus}) and (\ref{eqn:epsgrad2}), we have that
\begin{equation}
\label{eq:bound p^2}
\sup_{\e \in (0,1)} \sup_{t \in [0,T]}\int_{\R^{2n}}|p|^2 \tilde{W}_{\e}\rho_{\e}^t(x,p) \,dx \,dp  \leq C_2+\frac{n}{2}
\end{equation}
and so
$$
0\leq \sup_{\e \in (0,1)} \sup_{t \in [0,T]}\int_{\R^{n}\times (\R^n \setminus C^{(n)}_R)}  \tilde{W}_{\e}\rho_{\e}^t(x,p) \,dx \,dp  \leq \frac{1}{R^2}\left(C_2+\frac{n}{2}\right) \to 0 \qquad\mbox{as } R \to +\infty.
$$

\subsection{Weak Lipschitz continuity in time}
Here we prove that for all $\phi \in C^{\infty}_c(\R^{2n})$ the map
$$
t \in \R \mapsto f_{\e, \phi}(t):=\int_{\R^{2n}}\phi(x,p) \tilde{W}_{\e}\rho_t^{\e}(x,p)\,dx \,dp
$$
is differentiable and
\begin{equation}\label{eqn:equicont}
\sup_{\e \in (0,1)}\sup_{t \in \R} \left|\frac{d}{dt}f_{\e, \phi}(t)\right| \leq C_{\phi},
\end{equation}
where $C_{\phi}$ is a constant depending only on $\phi$. First observe that
\begin{equation}
f_{\e, \phi}(t)=\int_{\R^{2n}}W_{\e}\rho_t^{\e}(x,p) \phi_{\e}(x,p)\,dx \,dp,
\end{equation}
where $\phi_{\e}:=\phi \ast G_{\e}^{(2n)}$. Therefore, using (\ref{Wigner_calc_in}), we have
\begin{eqnarray}
\frac{d}{dt}f_{\e, \phi}(t) & = & \int_{\R^{2n}}\Errordelta{U_b}{\rho^\eps_t}(x,p)\phi_{\e}(x,p)\,dx \,dp \nonumber \\
                            &   & + \int_{\R^{2n}}\Errordelta{U_s}{\rho^\eps_t}(x,p)\phi_{\e}(x,p)\,dx \,dp \nonumber \\
                            &   & + \int_{\R^{2n}} (p \cdot \nabla_x \phi_{\e}(x,p) )W_{\e}\rho_t^{\e}(x,p) \,\,dx \,dp.
\end{eqnarray}
For the first term it is easy to check that
\begin{equation}\label{eqn:errordeltaUb1}
\left|\int_{\R^{2n}}\Errordelta{U_b}{\rho^\eps_t}(x,p)\phi_{\e}(x,p)\,dx \,dp \right| \leq \frac{\| \nabla U_b\|_{\infty}}{(2\pi)^n} \int_{\R^n}|y|\sup_{x \in \R^n}|{\cal F}_p\phi_{\e}|(x,y)\,dy.
\end{equation}

In the case of the Coulomb potential we follow a specific argument
borrowed from \cite[proof of Theorem~1.1(ii)]{amfrja}), based on the
inequality
\begin{equation}\label{euclidnew}
\biggl|\frac{1}{|z+w/2|}-\frac{1}{|z-w/2|}\biggr| \leq
\frac{|w|}{|z+w/2||z-w/2|}
\end{equation}
with $z=(x_i-x_j)\in\R^3$, $w=\e(y_i-y_j)\in\R^3$. By estimating the
difference quotients of $U_s$ as in \eqref{euclidnew}, using \eqref{defn:C1} we obtain
\begin{eqnarray}\label{secondaerrorn}
\biggl|\int_{\R^{2n}}\Errordelta{U_{s}}{\rho^{\e}_t}(x,p)\phi_{\e}(x,p)\,dx\,dp\biggr| &\leq & C_*\int_{\R^n}|y|\sup_{x'\in \R^n}|{\cal F}_p\phi_{\e}(x',y)|\,dy\int_{\R^n}U_{s}^2(x)\rho_{t}^{\e}(x,x)\,dx \nonumber \\
                                                                            & \leq &  C_* C_1 \int_{\R^n}|y|\sup_{x' \in \R^n}|{\cal
F}_p\phi_{\e}(x',y)|\,dy,
\end{eqnarray}
with $C_*$ depending only on the numbers $Z_1, \dots ,Z_M$, and $C_1$ is the constant defined in (\ref{defn:C1}).

For the last term it is easy to see that
\begin{equation}
\left| \int_{\R^{2n}} (p \cdot \nabla_x \phi_{\e}(x,p) )W_{\e}\rho_t^{\e}(x,p) \,\,dx \,dp \right| \leq \frac{1}{(2\pi)^n}  \int_{\R^n} \sup_{x' \in \R^n}|{\cal F}_p \tilde{\phi}_{\e}|(x',y)\,dy,
\end{equation}
where
$$
\tilde{\phi}_{\e}(x,p)=p \cdot \nabla_x \phi_{\e}(x,p).
$$
Therefore we have only to bound
$$
\int_{\R^n}|y|\sup_{x \in \R^n}|{\cal
F}_p\phi_{\e}(x,y)|\,dy \quad \textrm{and} \quad \int_{\R^n} \sup_{x' \in \R^n}|{\cal F}_p \tilde{\phi}_{\e}|(x',y)\,dy
$$
with a constant depending only on $\phi$.

For the first term
\begin{eqnarray*}
\int_{\R^n}|y|\sup_{x \in \R^n}|{\cal
F}_p\phi_{\e}(x,y)|\,dy & = & \int_{\R^n}|y|\sup_{x \in \R^n} \left| \int_{\R^n} G_{\e}^{(n)}(x-x'){\cal
F}_p\phi(x',y)
\,dx' \right| |e^{-y^2 \e /4}| \,dy \\
                         & \leq & \int_{\R^n}|y|\sup_{z \in \R^n}\left| {\cal
F}_p\phi(z,y)   \right|\,dy \leq C^{(1)}_{\phi}.
\end{eqnarray*}
For the second term
\begin{equation}
\int_{\R^n}\sup_{x' \in \R^n}|{\cal
F}_p\tilde{\phi}_{\e}(x',y)|\,dy =  \int_{\R^n}\sup_{x\in \R^n}\left| \int_{\R^{3n}}dp \,dx' dp' e^{-ip\cdot y} \phi(x',p') G_{\e}^{(n)}(p-p') \left(p \cdot \nabla_x G_{\e}^{(n)}(x-x')\right) \right|\,dy.
\end{equation}
Now observe that
\begin{eqnarray*}
& & \int_{\R^{3n}}dp \,dx' dp' e^{-ip\cdot y} \phi(x',p') G_{\e}^{(n)}(p-p') \left(p \cdot \nabla_x G_{\e}^{(n)}(x-x')\right) \\
& = & \sum_{k=1}^{n} \int_{\R^{2n}} \,dx' dp' \; \partial_{x_k}G_{\e}^{(n)}(x-x') G_{\e}^{(n)}(p')\int_{\R^n} dp \; p_k\phi(x',p-p')e^{-ip\cdot y} \\
& = & e^{-\e y^2/4} \left[ \int_{\R^{n}} \,dx' \; (\nabla_x \cdot {\cal
F}_p g(x-x',y)) G_{\e}^{(n)}(x')  \right. \\
&  & + \frac{i \e}{2}  \left. \int_{\R^{2n}} \,dx' \; (y \cdot \nabla_x{\cal
F}_p \phi(x-x',y)) G_{\e}^{(n)}(x')  \right],
\end{eqnarray*}
where $g(x,p)=p \phi(x,p)$. Now, since $\e \in (0,1)$
\begin{eqnarray*}
\int_{\R^n}\sup_{x' \in \R^n}|{\cal
F}_p\tilde{\phi}_{\e}(x',y)|\,dy  & \leq & \int_{\R^n}\sup_{x\in \R^n} \left| \int_{\R^{n}} \,dx' \; (\nabla \cdot {\cal
F}_p g(x-x',y)) G_{\e}^{(n)}(x')  \right|  \\
                                     &      & + \frac{ \e}{2}\int_{\R^n}\sup_{x\in \R^n} \left| \int_{\R^{2n}} \,dx' \; (y \cdot \nabla{\cal
F}_p \phi(x-x',y)) G_{\e}^{(n)}(x')  \right|  \\
                                     & \leq & \int_{\R^n} dy \; \sup_{z \in \R^n} |\nabla \cdot {\cal
F}_p g(z,y)| + \frac{ \e}{2} \int_{\R^n}dy \; |y| \sup_{z \in \R^n} |\nabla_z{\cal
F}_p \phi(z,y)| \\
                                     & \leq & C_{\phi}^{(2)}.
\end{eqnarray*}

Therefore
\begin{eqnarray*}
\sup_{\e \in (0,1)}\sup_{t \in \R}\left| \frac{d}{dt} f_{\e,\phi}(t) \right|  & \leq & \frac{\|\nabla U_b\|_{\infty}}{(2\pi)^n}C_{\phi}^{(1)}+C_{*}C_1C_{\phi}^{(1)}+\frac{C_{\phi}^{(2)}}{(2\pi)^n}.
\end{eqnarray*}

\subsection{Uniform decay away from the singularity}
\label{sect:sing}
The singular set of $U_s$ is given by
\begin{equation}\label{defn:S}
S=\bigcup_{1\leq i<j\leq M} S_{ij}, \quad S_{i,j}=\left\{ x=(x_1,\dots,x_M,\bar x) \in (\R^3)^M\times \R^{n-3M} \; : \; x_i=x_j\text{ for some $i \neq j$}\right\},
\end{equation}
and we have
\begin{equation}\label{eqn:maggUs}
U_s(x) \geq \frac{c}{\textrm{dist}(x,S)},
\end{equation}
where $c >0$ depending only on $Z_1, \dots, Z_M$.
We want to prove that
\begin{equation}\label{eqn:measlim}
\limsup_{\e \to 0} \int_{\R^{2n}}\left(|p|^4+ \frac{1}{\textrm{dist}(x,S)^2} \right)\tilde{W}_{\e}\rho_{\e}^t(x,p)\,dx \,dp \leq C.
\end{equation}
We start with the second term:
\begin{eqnarray}
\int_{\R^{2n}} \,dx \,dp \frac{1}{\textrm{dist}(x,S)^2} \tilde{W}_{\e}\rho_{\e}^t(x,p) & = & \int_{B^{(n)}_R \times \R^n} \,dx \,dx' \frac{\rho_{\e}^t(x',x')G_{\e}^{(n)}(x-x')}{\textrm{dist}(x,S)^2}\nonumber \\
                                                                                             & \leq & \int_{\R^{n}} \,dx' \frac{\rho_{\e}^t(x',x')}{\textrm{dist}(x',S)^2} \nonumber \\
                                                                                             & \leq & \frac{1}{c}\int_{\R^{n}} \,dx' U_s(x')^2\rho_{\e}^t(x',x') \nonumber \\
                                                                                             & \leq & \frac{C_1}{c}, \nonumber
\end{eqnarray}
where $c$ is defined in (\ref{eqn:maggUs}), $C_1$ is defined in (\ref{defn:C1}), and we used (\ref{eqn:maggUs}).

To prove the second estimate we observe that
\begin{eqnarray*}
\int_{\R^{2n}}|p|^4 \tilde{W}_{\e}\rho_{\e}^t(x,p)\,dx \,dp &\leq& \int_{\R^{2n}}|p|^4 {W}_{\e}\rho_{\e}^t(x,p)\,dx \,dp\\
& +&\frac{n\e}{2}  \int_{\R^{2n}}|p|^2 {W}_{\e}\rho_{\e}^t(x,p)\,dx \,dp + \frac{n(n+2)\e^2}{4}.
\end{eqnarray*}
Thanks to \eqref{eq:bound p^2}, it suffices to control the first integral in the right hand side:
\begin{eqnarray*}
\int_{\R^{2n}}|p|^4W_{\e}\rho_{\e}^t(x,p) \,dx \,dp & = & \sum_{j \in \N} \mu_j^{(\e)}\int_{\R^n}\left| \frac{1}{(2\pi \e)^{\e/2}}\hat{\phi}_{j,t}^{(\e)}\left(\frac{p}{\e}\right)\right|^2|p|^4 \; dp \nonumber \\
                                                   & = & \sum_{j \in \N} \mu_j^{(\e)}\int_{\R^n} |\e^2 \Delta \phi_{j,t}^{(\e)}(x)|^2\; dx \\
& \leq  & 2 \sum_{j \in \N} \mu_j^{(\e)}\int_{\R^n} \left[|H_\e \phi_{j,t}^{(\e)}(x)|^2 + U^2(x)|\phi_{j,t}^{(\e)}(x)|^2\right]\; dx,
\end{eqnarray*}
and the last term is uniformly bounded thanks to (\ref{eqn:supemeanH2}), \eqref{defn:C1}, and the boundedness of $U_b$.

\subsection{Limit continuity equation away from the singularities}\label{limconteqn}
We want to prove that
\begin{equation}\label{eqn:cont}
\lim_{\e \to 0} \int_{0}^T \left[ \varphi'(t) \int_{\R^{2n}} \phi(x,p) \tilde{W}_{\e} \rho^t_{\e}(x,p) \,dx\,dp+ \varphi(t) \int_{\R^{2n}} \bb(x,p) \cdot \nabla \phi(x,p) \tilde{W}_{\e} \rho^t_{\e}(x,p) \,dx\,dp \right] \; dt =0
\end{equation}
for all $\phi \in C_c^{\infty}(\R^{2n} \setminus (S \times \R^n))$ and $\varphi \in C_c^{\infty}(0,T)$.
Hence, recalling (\ref{pdehusimi}), we have to show that
\begin{equation}\label{eqn:U}
\lim_{\e \to 0} \sup_{t \in [0,T]} \int_{\R^{2n}} \,dx \,dp \;\Errordelta{U}{\rho^\e_t}\ast G^{(2n)}_\e(x,p) \phi(x,p)+ \int_{\R^{2n}} \,dx \,dp \; \nabla U(x) \cdot \nabla_p \phi(x,p) \tilde{W}_{\e} \rho^t_{\e}(x,p)=0,
\end{equation}
and
\begin{equation}\label{eqn:vect}
\lim_{\e \to 0} \int_{0}^T dt \; \varphi(t)  \int_{\R^{2n}} \,dx \,dp \; \sqrt{\e}\nabla_x \cdot
[W_\e\rho^\e_t \ast \bar{G}_{\e}^{(2n)}]\phi(x,p)=0,
\end{equation}
for all $\phi \in C_c^{\infty}(\R^{2n} \setminus (S \times \R^n))$ and $\varphi \in C_c^{\infty}(0,T)$.

\subsubsection{Verification of (\ref{eqn:U})}
We can consider separately the contributions of $U_b$ and $U_s$. We start with the contribution of $U_s$. We have to prove that
\begin{equation}\label{eqn:U_s}
\lim_{\e \to 0} \sup_{t \in [0,T]} \int_{\R^{2n}} \,dx \,dp \;\Errordelta{U_s}{\rho^\e_t}\ast G^{(2n)}_\e(x,p) \phi(x,p)+ \int_{\R^{2n}} \,dx \,dp \; \nabla U_s(x) \cdot \nabla_p \phi(x,p) \tilde{W}_{\e} \rho^t_{\e}(x,p)=0
\end{equation}
for all $\phi \in C_c^{\infty}(\R^{2n} \setminus (S \times \R^n))$.

We know that
\begin{equation}\label{eqn:wigvshus}
\lim_{\e \to 0} \sup_{t \in [0,T]} \left[ \int_{\R^{2n}}\varphi(x,p) W_{\e}\rho_{\e}^t(x,p) \,dx\,dp - \int_{\R^{2n}}\varphi(x,p) \tilde{W}_{\e}\rho_{\e}^t(x,p) \,dx\,dp \right]=0
\end{equation}
for all $\varphi \in C_c^{\infty}(\R^{2n})$.

First of all, we see that we can apply
\eqref{eqn:wigvshus} with $\varphi(x,p)=\nabla U_s(x) \cdot \nabla_p \phi (x,p)$ to
replace the integrals
$$
\int_{\R^{2n}} \nabla
U_s(x) \cdot \nabla_p\phi (x,p) \tilde W_\e\psi^\e\,\,dx\,dp
$$
with
$$
\int_{\R^{2n}} \nabla
U_s(x) \cdot \nabla_p\phi (x,p)
W_\e\psi^\e\,\,dx\,dp
$$
in the verification of \eqref{eqn:U_s}.
Analogously, using \eqref{defn:C1} and \eqref{secondaerrorn} we
see that we can replace
$$
\int_{\R^{2n}}\Errordelta{U_s}{\rho_{\e}^t}\ast G^{(2n)}_\e(x,p) \phi(x,p)\,\,dx\,dp
$$
with
$$
\int_{\R^{2n}}\Errordelta{U_s}{\rho_{\e}^t}(x,p)\phi(x,p)\,\,dx\,dp.
$$
Thus, we are led to show the convergence
\begin{equation}\label{erro_co3}
\lim_{\e \to
0} \sup_{t \in [0,T]}\int_{\R^{2n}}\Errordelta{U_s}{\rho_{\e}^t}\phi\,\,dx\,dp+\int_{\R^{2n}}
\nabla U_s(x) \cdot \nabla_p\phi(x,p ) W_\e\rho_{\e}^t(x,p)
\,\,dx\,dp=0
\end{equation}
for all $\phi\in C^\infty_c((\R^{n}\setminus S) \times\R^n)\bigr)$. Since
$$
\int_{\R^{2n}}\Errordelta{U_s}{\rho_{\e}^t}\phi\,\,dx\,dp= \int_{\R^{2n}}
\frac{U_s(x+\frac{\e}{2}y)-U_s(x-\frac{\e}{2}y)}{\e}
\rho_{\e}^t\left(x+\frac{\e y}{2},x-\frac{\e y}{2}\right)  {\cal
F}_p\phi(x,y)\,dxdy
$$
we can split the region of integration in two parts, where
$\sqrt{\e}|y|>1$ and where $\sqrt{\e}|y|\leq 1$. The contribution of
the first region can be estimated as in \eqref{secondaerrorn}, with
$$
C_*\int_{\{\sqrt{\e}|y|>1\}}|y|\sup_{x'}|{\cal
F}_p\phi(x',y)|\,dy\int_{\R^n}U_{s}^2(x) \rho_{\e}^t(x,x)\,dx,
$$
which is infinitesimal, using \eqref{defn:C1} again, as $\e\to
0$. Since
$$
\frac{U_s(x+\frac{\e}{2}y)-U_s(x-\frac{\e}{2}y)}{\e}\to \nabla
U_s(x) \cdot y  $$ uniformly as $\sqrt{\e}|y|\leq 1$ and $x$ belongs
to a compact subset of $\R^n\setminus S$, the contribution of the
second part is the same of
$$
\int_{\R^{2n}} (\nabla U_s(x) \cdot y )
\rho_{\e}^t\left(x+\frac{\e y}{2},x-\frac{\e y}{2}\right){\cal
F}_p\phi(x,y)\,dxdy
$$
which coincides with
$$
\int_{\R^{2n}} \nabla U_s(x) \cdot \nabla_p\phi (x,p)
W_\e\rho_{\e}^t(x,p)\,\,dx\,dp.
$$

Now we consider the contribution of $U_b$. We have to prove that
\begin{equation}\label{erro_co}
\lim_{\e \to  0} \sup_{t \in [0,T]}
\int_{\R^{2n}}\Errordelta{U_b}{\rho^t_{\e}}(x,p)\phi_{\e}(x,p)\,\,dx\,dp+\int_{\R^{2n}}
\nabla U_b(x) \cdot \nabla_p\phi (x,p)\tilde
W_\e\rho^\e_t \,\,dx\,dp =0
\end{equation}
for all $\phi\in C^\infty_c(\R^{2n})$, where $\phi_{\e}=\phi \ast G_{\e}^{(2n)}$.
The proof of (\ref{erro_co}) is divided in two parts: first we prove that
\begin{equation}\label{erro_co2}
\lim_{\e \to  0} \sup_{t \in [0,T]}
\int_{\R^{2n}}\Errordelta{U_b}{\rho^t_{\e}}(x,p)\phi(x,p)\,\,dx\,dp+\int_{\R^{2n}}
\nabla U_b(x) \cdot \nabla_p\phi (x,p)\tilde
W_\e\rho^\e_t \,\,dx\,dp =0
\end{equation}
for all $\phi\in C^\infty_c(\R^{2n})$, and then, using the following estimate
\begin{equation}\label{eqn:errordeltaUb2}
\left|\int_{\R^{2n}}\Errordelta{U_b}{\rho^\eps_t}(x,p)\varphi(x,p)\,dx \,dp \right| \leq \frac{\| \nabla U_b\|_{\infty}}{(2\pi)^n} \int_{\R^n}|y|\sup_{x \in \R^n}|{\cal F}_p\varphi|(x,y)\,dy.
\end{equation}
for all $\varphi \in C^{\infty}_c(\R^{2n})$, we can replace $\phi$ by $\phi_{\e}$ in the first summand of (\ref{erro_co2}), obtaining (\ref{erro_co}).
The proof of (\ref{erro_co2}) is achieved by a density argument. The first
remark is that linear combinations of tensor functions
$\phi(x,p)=\phi_1(x)\phi_2(p)$, with $\phi_i\in C^\infty_c(\R^n)$,
are dense for the norm considered in \eqref{eqn:errordeltaUb2}. In this
way, we are led to prove convergence in the case when
$\phi(x,p)=\phi_1(x)\phi_2(p)$. The second remark is that
convergence surely holds if $U_b$ is of class $C^2$ (by the arguments
in \cite{lionspaul}, \cite{amfrja}). Hence,
combining the two remarks and using the linearity of the error term
with respect to the potential, we can prove convergence by a
density argument, by approximating $U_b$ uniformly and in $W^{1,2}$
topology on the support of $\phi_1$ by potentials $V_k\in C^2(\R^n)$
with uniformly Lipschitz constants; then, setting $A_k=(U_b-V_k)\phi_1$ and choosing a sequence
$\lambda_k$ in Lemma~\ref{apriorinice} converging slowly to $0$ for $k\to +\infty$, in such a way that
$\|\nabla A_k\|_2 = o(\lambda_k^{n/4})$ for $k\to +\infty$.
In this way we obtain
$$
\lim_{k\to\infty} \sup_{\e \in (0,1)} \sup_{t\in [0,T]}\int_{\R^{2n}}\Errordelta{U_b-V_k}{\rho^\e_t}(x,p)\phi_1(x)\phi_2(p)\,\,dx\,dp
=0.
$$
As for the term in \eqref{erro_co} involving the Husimi transforms,
we can use \eqref{eqn:esthus} to obtain that
\begin{eqnarray*}
& & \limsup_{k\to\infty}\sup_{\e \in (0,1)} \sup_{t\in [0,T]} \left|\int_{\R^{2n}}
\tilde W_\e\rho^\e_t \nabla
(U_b(x)-V_k(x))\cdot \nabla\phi_2(p)\phi_1(x)\,\,dx\,dp \right| \\
& & \leq \frac{C}{(2\pi)^n} \limsup_{k\to\infty}\int_{\R^n}|\phi_1(x)||\nabla U_b(x)-\nabla V_k(x)|\,dx
\int_{\R^n}|\nabla\phi_2(p)|\,dp=0.
\end{eqnarray*}
So we need only to prove the following lemma:

\begin{lemma}[A priori estimate]\label{apriorinice}
For all $\lambda>0$, we have
that
\begin{eqnarray}\label{apriori}
& & \sup_{\e \in (0,1)} \sup_{t\in [0,T]}\biggl|\int_{\R^{2n}}
\Errordelta{U_b}{\rho_t^\e}(x,p)\phi_1(x)\phi_2(p)\,\,dx\,dp
\biggr| \\
& \leq &  \|\phi_1\|_1\|\nabla
U_b\|_\infty\sup_{y \in \R^n}|y||\hat{\phi}_2(y)-\hat{\phi}_2\ast
G^{(n)}_\lambda(y)|+\sqrt{\lambda}\|\nabla
A\|_\infty\|\hat\phi_2\|_1\int_{\R^n}|u|G^{(n)}_1(u)\,du \\
& & +\frac{\sqrt{C}\|\nabla
A\|_2}{(2\pi\lambda)^{n/4}}\int_{\R^n}|z||\hat{\phi}_2|(z)\,dz+
\|U_b\|_\infty\|\nabla\phi_1\|_\infty\int_{\R^n}|y||\hat{\phi}_2\ast
G^{(n)}_\lambda|(y)\,dy
\end{eqnarray}
where $A:=U_b\phi_1$ and $C$ is the constant in \eqref{eqn:disop}.
\end{lemma}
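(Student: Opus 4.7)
The plan is to start from
$$I := \int_{\R^{2n}}\Errordelta{U_b}{\rho_t^\e}(x,p)\phi_1(x)\phi_2(p)\,dx\,dp,$$
apply Fourier inversion in $p$ to the defining formula \eqref{error_delta}, and obtain
$$I = -\frac{i}{(2\pi)^n}\int_{\R^{2n}}\frac{U_b(x+\tfrac{\e}{2}y)-U_b(x-\tfrac{\e}{2}y)}{\e}\,\rho_t^\e(x+\tfrac{\e}{2}y,x-\tfrac{\e}{2}y)\,\phi_1(x)\,\hat\phi_2(y)\,dx\,dy.$$
I would then introduce the Gaussian regularization $\psi_\lambda := \hat\phi_2 \ast G_\lambda^{(n)}$ and split $I = I_{\mathrm{rough}} + I_{\mathrm{smooth}}$ according to whether $\hat\phi_2(y)$ is replaced by the error $\hat\phi_2(y)-\psi_\lambda(y)$ or by $\psi_\lambda(y)$. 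Each of the four terms on the right hand side of \eqref{apriori} will then arise from a specific sub-piece of this decomposition.

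For $I_{\mathrm{rough}}$, the pointwise Lipschitz bound $|U_b(x+\tfrac{\e}{2}y)-U_b(x-\tfrac{\e}{2}y)|/\e \le |y|\|\nabla U_b\|_\infty$ allows me to pull out the scalar factor $\sup_y |y||\hat\phi_2(y)-\psi_\lambda(y)|$. The remaining double integral of $|\rho_t^\e(x+\tfrac{\e}{2}y,x-\tfrac{\e}{2}y)||\phi_1(x)|$ is then controlled via the positivity-based Cauchy-Schwarz bound $|\rho_t^\e(a,b)| \le \sqrt{\rho_t^\e(a,a)\rho_t^\e(b,b)}$ combined with the normalization $\textrm{tr}(\tilde\rho_t^\e)=1$, which extracts the $\|\phi_1\|_1$ factor and yields the first term of \eqref{apriori}.

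For $I_{\mathrm{smooth}}$, I would use the elementary algebraic identity
$$[U_b(a)-U_b(b)]\phi_1\Bigl(\frac{a+b}{2}\Bigr) = [A(a)-A(b)] + U_b(a)\Bigl[\phi_1\Bigl(\frac{a+b}{2}\Bigr)-\phi_1(a)\Bigr] - U_b(b)\Bigl[\phi_1\Bigl(\frac{a+b}{2}\Bigr)-\phi_1(b)\Bigr]$$
with $a = x+\tfrac{\e}{2}y$, $b = x-\tfrac{\e}{2}y$, and $A = U_b\phi_1$. The two ``correction'' terms are pointwise dominated by $\tfrac{|a-b|}{2}\|U_b\|_\infty\|\nabla\phi_1\|_\infty = \tfrac{\e|y|}{2}\|U_b\|_\infty\|\nabla\phi_1\|_\infty$, and after dividing by $\e$ and integrating against $\rho_t^\e$ (using trace one), they produce the fourth term of \eqref{apriori}. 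The remaining main term, built from $[A(a)-A(b)]$, is reshaped via the fundamental theorem of calculus, $A(a)-A(b) = \int_0^1\nabla A(b+s(a-b))\cdot(a-b)\,ds$; in $(x,y)$ coordinates this becomes an integral of $\nabla A(\xi)\cdot y$ weighted by $\rho_t^\e(\xi+(1-s)\e y,\xi-s\e y)$ and $\psi_\lambda(y)$.

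To extract the remaining two estimates, I would expand $\psi_\lambda(y) = \int\hat\phi_2(y-\sqrt\lambda u)\,G_1^{(n)}(u)\,du$ and split $y = (y-\sqrt\lambda u) + \sqrt\lambda u$ inside the dot product with $\nabla A$. The $\sqrt\lambda u$ piece is controlled by $\sqrt\lambda\|\nabla A\|_\infty$ together with trace one for $\rho_t^\e$, yielding the second term of \eqref{apriori}. For the $(y-\sqrt\lambda u)$ piece, relabeling $y' = y-\sqrt\lambda u$, I apply Cauchy-Schwarz in the spatial variable $\xi$: the gradient side is absorbed into $\|\nabla A\|_2$, and the conjugate factor is estimated via Plancherel together with the Hilbert-Schmidt bound $\|\tilde\rho_t^\e\|_{\mathrm{HS}}^2 \le \|\tilde\rho_t^\e\|_{\mathrm{op}}\,\textrm{tr}(\tilde\rho_t^\e) \le C\e^n$ that follows from \eqref{eqn:disop} and which translates into $\|W_\e\rho_t^\e\|_2 \le \sqrt{C}/(2\pi)^{n/2}$; combined with $\|G_\lambda^{(n)}\|_2 = (2\pi\lambda)^{-n/4}$, this produces exactly the third term of \eqref{apriori}. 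The main obstacle will be this $L^2$ step: one must pass from a kernel-level integral into an expression involving $W_\e\rho_t^\e$ in a form where \eqref{eqn:disop} applies cleanly, and carefully track the Fourier conventions so that the Gaussian scaling in $\lambda$ materializes as the correct $(2\pi\lambda)^{-n/4}$ factor; the remaining three estimates are essentially mechanical.
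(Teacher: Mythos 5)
Your decomposition is the same as the paper's: mollify the $p$-side test function (your split $\hat\phi_2=(\hat\phi_2-\hat\phi_2\ast G^{(n)}_\lambda)+\hat\phi_2\ast G^{(n)}_\lambda$ is exactly the paper's replacement of $\phi_2$ by $\phi_2 e^{-\lambda|p|^2}$), then rewrite $[U_b(a)-U_b(b)]\phi_1(\tfrac{a+b}{2})$ as $[A(a)-A(b)]$ plus the two $U_b\cdot[\phi_1(\tfrac{a+b}{2})-\phi_1(\cdot)]$ corrections. The first, second and fourth terms of \eqref{apriori} come out just as you describe, via the Lipschitz bound on $U_b$, trace one, and the positivity bound $|\rho^\e_t(a,b)|\le\sqrt{\rho^\e_t(a,a)\rho^\e_t(b,b)}$. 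So far this matches the paper step for step.

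The genuine gap is in your treatment of the third term. After the FTC rewrite and the relabeling $y'=y-\sqrt\lambda u$, Cauchy--Schwarz \emph{in $\xi$ alone} yields the factor $\|\nabla A\|_2\,|y'|$ together with a conjugate factor
\[
\sqrt{R(w)},\qquad R(w):=\int_{\R^n}|\rho^\e_t(\eta+w,\eta)|^2\,d\eta,\quad w=\e(y'+\sqrt\lambda u),
\]
i.e.\ the $L^2$ norm of the kernel along a \emph{single off-diagonal slice}. The Hilbert--Schmidt bound $\|\tilde\rho^\e_t\|^2_{\mathrm{HS}}\le C\e^n$ (equivalently your $\|W_\e\rho^\e_t\|_2\le\sqrt{C}/(2\pi)^{n/2}$) controls only $\int R(w)\,dw$, not $R(w)$ pointwise, so Plancherel plus the HS bound does not apply here as you state. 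One needs one more Cauchy--Schwarz, in the Gaussian variable $u$: $\int\sqrt{R(\e(y'+\sqrt\lambda u))}\,G^{(n)}_1(u)\,du\le\|G^{(n)}_1\|_2\,\bigl((\e\sqrt\lambda)^{-n}\!\int R\bigr)^{1/2}\le\sqrt{C}\,(2\pi\lambda)^{-n/4}$; it is this step, not $\|G^{(n)}_\lambda\|_2$, that actually produces the $(2\pi\lambda)^{-n/4}$. The paper sidesteps the slice issue entirely by keeping the spectral decomposition and extracting from \eqref{eqn:disop} the \emph{pointwise} bound \eqref{eqn:eps2}, $\sup_{y}\sum_j\mu_j^{(\e)}|\phi^{(\e)}_{j,t}\ast G^{(n)}_{\lambda\e^2}(y)|^2\le C(2\pi\lambda)^{-n/2}$, by testing the operator inequality against normalized Gaussians $g_{\e,\lambda,y}$; a Cauchy--Schwarz in the spectral index $j$ then gives $\sqrt{\rho^\e_t(v,v)}$ on the other side and the third term follows from trace one. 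That pointwise consequence of \eqref{eqn:disop} is the key ingredient your sketch is missing.
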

\begin{proof} Set $\hat{\phi}_2={\cal F}_p\phi_2$. Observe that since \eqref{eqn:errordeltaUb2}
gives that
\begin{eqnarray*}
& & \sup_{\e \in (0,1)} \sup_{t\in [0,T]}\biggl|\int_{\R^{2n}}\Errordelta{U_b}{\rho^\e_t}\phi_1(x)\phi_2(p)\,\,dx\,dp-
\int_{\R^{2n}}\Errordelta{U_b}{\rho^\e_t}\phi_1(x)\phi_2(p)e^{-|p|^2\lambda}\,\,dx\,dp\biggr| \\
& \leq & \|\phi_1\|_1\|\nabla
U_b\|_\infty \sup_{y \in \R^n}|y||\hat{\phi}_2(y)-\hat{\phi}_2\ast
G^{(n)}_\lambda(y)|
\end{eqnarray*}
we recognize the first error term in
\eqref{apriori}. So we have only to estimate
\begin{equation}
\sup_{\e \in (0,1)} \sup_{t\in [0,T]}\biggl|\int_{\R^{2n}}\Errordelta{U_b}{\rho^\e_t}\phi_1(x)\phi_2(p)e^{-|p|^2\lambda}\,\,dx\,dp\biggr|.
\end{equation}
Observe that
$$
\int_{\R^{2n}}\Errordelta{U_b}{\rho^\e_t}\phi_1(x)\phi_2(p)e^{-|p|^2\lambda}\,\,dx\,dp=I_{\e,t}+II_{\e,t}-III_{\e,t},
$$
where
\begin{equation}\label{defI}
I_{\e,t}:= \int_{\R^{2n}}\frac{A(x+\frac{\e}{2} y) -A(x-\frac{\e}{2}
y)}{\e}\hat{\phi}_2\ast G^{(n)}_\lambda(y) \rho^\e_t(x+\frac{\e}{2}
y,x-\frac{\e}{2} y)\,dxdyd,
\end{equation}
\begin{equation}\label{defII}
II_{\e,t}:=\int_{\R^{2n}}U_b(x+\frac{\e}{2}y)\frac{\phi_1(x)
-\phi_1(x+\frac{\e}{2} y)}{\e}\hat{\phi}_2\ast G^{(n)}_\lambda(y)\rho^\e_t(x+\frac{\e}{2}
y,x-\frac{\e}{2} y)
\,dxdy,
\end{equation}
\begin{equation}\label{defIII}
III_{\e,t}:=-\int_{\R^{2n}}U_b(x-\frac{\e}{2}y)\frac{\phi_1(x)
-\phi_1(x-\frac{\e}{2} y)}{\e}\hat{\phi}_2\ast G^{(n)}_\lambda(y)
\rho^\e_t(x+\frac{\e}{2}
y,x-\frac{\e}{2} y)\,dxdy.
\end{equation}
Observe first that
$$
\sup_{\e \in (0,1)} \sup_{t\in [0,T]} |II_{\e,t}|+|III_{\e,t}| \leq \|U_b\|_{\infty}\|\nabla \phi_1\|_{\infty}\int_{\R^n}|y||\hat{\phi}_2\ast
G^{(n)}_\lambda|(y)\,dy.
$$
The estimate of $I_{\e,t}$ is more delicate: we first perform some manipulations of
this expression,then we estimate the resulting terms with the help of
\eqref{eqn:eps2}.

We expand the convolution product and make the change of variables
$$
u=x+\frac{\e}{2}y \qquad v=x-\frac{\e}{2} y
$$
to get
\begin{eqnarray}\label{eqn:explI}
I_{\e,t} &= & \frac{1}{(\pi\lambda)^{n/2}\e^n}\int_{\R^{3n}}\, du dv dz
\frac{A(u)-A(v)}{\epsilon} e^{-\scriptstyle{\frac{|\e
z-(u-v)|^2}{\e^2\lambda}}}
\rho^\e_t(u,v)\hat{\phi}_2(z)\nonumber \\
        & = & \frac{1}{\e} \sum_{j \in \N} \mu_j^{(\e)} \int_{\R^{2n}}(A\phi_{j,t}^{(\e)})\ast G_{\lambda \e^2}^{(n)}(v+\e z)\overline{\phi_{j,t}^{(\e)}(v)}\hat{\phi}_2(z) \,dv dz \nonumber \\
        &   & -\frac{1}{\e} \sum_{j \in \N} \mu_j^{(\e)} \int_{\R^{2n}}A(v)(\phi_{j,t}^{(\e)} \ast G_{\lambda \e^2}^{(n)})(v+\e z)\overline{\phi_{j,t}^{(\e)}(v)}\hat{\phi}_2(z) \,dv dz \nonumber \\
        & = & \frac{1}{\e} \sum_{j \in \N} \mu_j^{(\e)} \int_{\R^{2n}} \left[(A\phi_{j,t}^{(\e)})\ast G_{\lambda \e^2}^{(n)}(v+\e z) -A(v+\e z)(\phi_{j,t}^{(\e)} \ast G_{\lambda \e^2}^{(n)})(v+\e z) \right]\overline{\phi_{j,t}^{(\e)}(v)}\hat{\phi}_2(z)\,dv dz \nonumber \\
        &   & +\frac{1}{\e} \sum_{j \in \N} \mu_j^{(\e)} \int_{\R^{2n}} \left[A(v+\e z)-A(v)\right](\phi_{j,t}^{(\e)} \ast G_{\lambda \e^2}^{(n)})(v+\e z)\overline{\phi_{j,t}^{(\e)}(v)}\hat{\phi}_2(z)\,dv dz.
\end{eqnarray}
Now let us estimate the first summand in (\ref{eqn:explI})
\begin{eqnarray*}
& & \left|\frac{1}{\e} \sum_{j \in \N} \mu_j^{(\e)} \int_{\R^{2n}} \left[(A\phi_{j,t}^{(\e)})\ast G_{\lambda \e^2}^{(n)}(v+\e z) -A(v+\e z)(\phi_{j,t}^{(\e)} \ast G_{\lambda \e^2}^{(n)})(v+\e z) \right]\overline{\phi_{j,t}^{(\e)}(v)}\hat{\phi}_2(z)\,dv dz \right| \\
& = & \left| \int_{\R^{n}}dz \, \hat{\phi}_2(z)\int_{\R^{2n}}du dv \, \frac{A(v+\e z-u)-A(v+\e z)}{\e}G_{\lambda \e^2}^{(n)}(u)\overline{\phi_{j,t}^{(\e)}(v)} \phi_{j,t}^{(\e)}(v+\e z-u)\right| \\
& \leq & \|\nabla A\|_{\infty}\int_{\R^{n}}dz \, |\hat{\phi}_2(z)| \int_{\R^{2n}}du dv \,\frac{|u|}{\e}G_{\lambda \e^2}^{(n)}(u) |\overline{\phi_{j,t}^{(\e)}(v)}||\phi_{j,t}^{(\e)}(v+\e z-u)| \\
& \leq & \sqrt{\lambda}\|\nabla A\|_{\infty}  \|\hat{\phi}_2\|_1 \int_{\R^{2n}} |u|G_{1}^{(n)}(u) \, du.
\end{eqnarray*}
For the second summand in (\ref{eqn:explI}), using (\ref{eqn:eps2}), we have
\begin{eqnarray*}
& & \left| \frac{1}{\e} \sum_{j \in \N} \mu_j^{(\e)} \int_{\R^{2n}} \left[A(v+\e z)-A(v)\right](\phi_{j,t}^{(\e)} \ast G_{\lambda \e^2}^{(n)})(v+\e z)\overline{\phi_{j,t}^{(\e)}(v)}\hat{\phi}_2(z)\,dv dz \right|\\
& \leq & \sum_{j \in \N} \mu_j^{(\e)}\int_{\R^{2n}}\left| \frac{A(v+\e z)-A(v)}{\e} \right| |(\phi_{j,t}^{(\e)} \ast G_{\lambda \e^2}^{(n)})(v+\e z)||\overline{\phi_{j,t}^{(\e)}(v)}||\hat{\phi}_2(z) |\,dv dz \\
& \leq & \sqrt{\frac{C}{(2\pi \lambda)^{n/2}} } \|\nabla A\|_{2} \int_{\R^{n}} |z||\hat{\phi}_2(z) |\,dz.
\end{eqnarray*}
This completes the estimate of the term in \eqref{defI} and the
proof.
\end{proof}

\subsubsection{Verification of (\ref{eqn:vect})}
This is easy, taking
into account the fact that
$$
\int_{\R^{2n}}
W_\e\rho_\e^{t}\ast \bar{G}_\e^{(2n)}(x,p)\cdot \nabla_x\phi(x,p) \,\,dx\,dp=
\int_{\R^{2n}} W_\e\rho_\e^{t} \nabla_x\cdot
[\phi\ast\bar{G}_\e^{(2n)}]\,\,dx\,dp
$$
are uniformly bounded (recall that $\bar{G}_\e^{(2n)}$, defined in \eqref{pdehusimibis}, are probability densities).

\subsection{Proof of Theorem \ref{thm:mainthm}}

Define $\WW^{(\e)}:[0,T] \to \Probabilities{\R^{2n}}$ as $\WW^{(\e)}_t:=\tilde{W}_{\e}\rho_{\e}^t \Leb{2n}$ for all $\e \in (0,1)$ and
$t \in [0,T]$. Using (\ref{tightnesshus}), (\ref{eqn:equicont}) and Ascoli-Arzel\`a Theorem, one can prove easily that there exist a subsequence $\left\{\WW^{(\e_k)} \right\}_{k\in \N}$ and $W: [0,T] \to \Probabilities{\R^{2n}}$ such that
\begin{equation}\label{eqn:extract}
\lim_{k \to \infty}\sup_{t \in [0,T]}d_{{\mathscr
P}}(\WW^{(\e_k)}_t,W_t)=0.
\end{equation}
We now prove the following assertions:
\begin{enumerate}
\item $W: [0,T] \to \Probabilities{\R^{2n}}$ is weakly continuous and, for all $t\in [0,T]$,
$W_t=\tilde{\WW}_t \Leb{2n}$ for some function $\tilde{\WW}_t\in L^{1}(\R^{2n})\cap L^{\infty}(\R^{2n})$. Moreover $\tilde{\WW}_t \geq 0$
 and $\sup_{t \in [0,T]}\|\tilde{\WW}_t\|_{L^1(\R^{2n})}+\|\tilde{\WW}_t\|_{L^\infty(\R^{2n})}\leq C$.
In particular, $\tilde{\WW}\in L^{\infty}_+([0,T];L^{1}(\R^{2n})\cap L^{\infty}(\R^{2n}))$.

\item $\bb\in L^1_{\rm loc}\bigl((0,T)\times\R^{2n};dt\,dW_t\bigr)$, so the continuity equation (\ref{eqn:continuity}) with $\omega_t=\tilde\WW_t$ makes sense;

\item $W$ solves (\ref{eqn:continuity}) in the sense of distributions on $[0,T]\times \R^{2n}$;
\item For any $\phi \in C^\infty_c(\R^{2n})$, $t \mapsto \int_{\R^{2n}}\phi\,d W_t$ belongs to $C^1([0,T])$.
\end{enumerate}
\textit{Proof of (i):} Observe that (\ref{eqn:extract}) implies that $W: [0,T] \to \Probabilities{\R^{2n}}$ is weakly continuous because
it is uniform limit of the weakly continuous maps $\WW^{(\e_k)}$. The second part of the proposition follows immediately from (\ref{eqn:esthus}). Indeed, for all $\phi \in L^1(\R^{2n})$,
\begin{equation}\label{regconde}
\sup_{\e \in (0,1)} \sup_{t \in [0,T]}\int_{\R^{2n}} \phi(x,p)\tilde{W}_{\e}\rho^{\e}_t(x,p) \, \,dx\,dp \leq \frac{C}{(2\pi)^n}\int_{\R^{2n}}\phi(x,p)\, \,dx\,dp
\end{equation}
and so
\begin{equation}\label{regcond}
\sup_{t \in [0,T] }\int_{\R^{2n}} \phi(x,p) dW_t(x,p) \leq \frac{C}{(2\pi)^n}\int_{\R^{2n}}\phi(x,p)\, \,dx\,dp.
\end{equation}

\textit{Proof of (ii):} The estimate $\bb\in L^1_{\rm loc}\bigl((0,T)\times\R^{2n};dW_tdt\bigr)$ follows easily from (\ref{eqn:measlim}) and \eqref{eq:bound p^2}.

\textit{Proof of (iii):} First we prove that $\tilde{\WW}$ solves (\ref{eqn:continuity}) in $\R^{2n}\setminus (S \times \R^n)$, where $S$ is the singular set of $U_s$ defined in (\ref{defn:S}).
Unfortunately this does not follow  immediately by (\ref{eqn:cont}) because we have no information about the singular set $\Sigma$ of
$\nabla U_b$, so we cannot control the limit $k \to \infty$ of
$$
\int_{0}^T dt\, \varphi(t) \int_{\R^{2n}}\,dx \,dp \,\nabla U_b(x) \cdot \nabla_p\phi(x,p) \tilde{W}_{\e_k}\rho^{t}_{\e_k}(x,p)
$$
in (\ref{eqn:continuity}) with (\ref{eqn:extract}).
But we can proceed by a density argument because, using the regularity conditions (\ref{regconde}) and (\ref{regcond}), we can approximate $\nabla U_b$ in $L^1$  on supp$\phi$ by bounded continuous functions.

In order to prove that $\tilde{\WW}$ solves (\ref{eqn:continuity}) in $[0,T]\times \R^{2n}$ we use (\ref{eqn:measlim}) to obtain that
\begin{equation}\label{eqn:Wlimdist}
\sup_{t \in [0,T]} \int_{\R^{2n}}\frac{1}{\textrm{dist}(x,S)^2} \, dW_t(x,p)\,dt < +\infty .
\end{equation}
Observe that (\ref{eqn:Wlimdist}) implies that $W_t(S\times\R^n)=0$ for
every $t\in (0,T)$. The proof of the global validity of the
continuity equation uses the classical argument of removing the
singularity by multiplying any test function $\phi\in
C^\infty_c(\R^{2n})$ by $\chi_k$, where $\chi_k(x)=\chi(k{\rm
dist}(x,S))$ and $\chi$ is a smooth cut-off function equal to $0$ on
$[0,1]$ and equal to $1$ on $[2,+\infty)$, with $0\leq \chi' \leq 2$. If we use $\phi\chi_k$ as
a test function, since $\chi_k$ depends on $x$ only, we can use the
particular structure of $\bb$, namely $\bb(x,p)=(p,-\nabla U(x))$, to write the term depending on the
derivatives of $\chi_k$ as
$$
k\int_{\R^{2n}}\phi\chi'(k{\rm dist}(x,S)) p \cdot \nabla{\rm
dist}(x,S) \,dW_t(x,p) dt.
$$
If $K$ is the support of $\phi$, the integral above can be bounded by
$$
2\sup_K|p\phi|\int_{\{x\in K: k{\rm dist}(x,S)\leq
2\}}k\,dW_t(x,p)\,dt\leq\frac{8\max_K|p\phi|}{k}
\int_K\frac{1}{{\rm dist}^2(x,S)}\,dW_t(x,p),
$$
and the right hand side is infinitesimal (uniformly in $t$) as $k\to\infty$.

\textit{Proof of (iv):} Since the distributional derivative of $t \mapsto \int_{\R^{2n}}\phi W_t \,dx\,dp$ is given by $\int_{\R^{2n}}\bb\cdot \nabla \phi \,dW_t$,
we have to show that the map
$$
t \mapsto \int_{\R^{2n}}\bb\cdot \nabla \phi\,d W_t
$$
is continuous. Observing that the map $t \mapsto W_t$ is weakly continuous and $W_t =\tilde{\WW}_t \Leb{2n}$ with $\tilde{\WW}\in L^{\infty}_+([0,T];L^{1}(\R^{2n})\cap L^{\infty}(\R^{2n}))$,
the only delicate term is
$$
\int_{\R^{2n}}\nabla U_s(x)\cdot \nabla_p \phi(x,p)\,d W_t.
$$
Define the nonnegative Hamiltonian function $\mathcal{H}=|p|^2/2 + U + \|U_b\|_\infty$. Taking the limit in \eqref{eqn:measlim} as $\e\to 0$ we easily deduce that
$$
\sup_{t \in [0,T]}\int_{\R^{2n}} \mathcal{H}^2\,d W_t \leq C \sup_{t \in [0,T]}\int_{\R^{2n}} \Bigl(1+|p|^4+U_s^2(x)\Bigr)\,d W_t <+\infty.
$$
Since the Hamiltonian is preserved by the Liouville dynamics (under our assumptions on the potential, this fact is contained in the proof of \cite[Theorem 6.1]{AFFGP}), the above bound implies
$$
\sup_{t \in [0,T]}\int_{\{\mathcal{H}\geq N\}} \mathcal{H}^2 \,dW_t =\int_{\{\mathcal{H}\geq N\}} \mathcal{H}^2 \,dW_0 \to 0 \qquad \text{as $N \to \infty$.}
$$
As $U_s \leq \mathcal{H}$, this implies
$$
\sup_{t \in [0,T]}\int_{\{U_s \geq N\}} U_s^2\, dW_t \leq \int_{\{\mathcal{H}\geq N\}} H^2 \,dW_0 \to 0 \qquad \text{as $N \to \infty$.}
$$
Hence, if we define the sets $A_N:=\{U_s \leq N\}$, the functions
$$
t \mapsto f_N(t):=\int_{A_N}\nabla U_s(x)\cdot \nabla_p \phi \,dW_t
$$
are continuous and converge uniformly to $\int_{\R^{2n}}\nabla U_s(x)\cdot \nabla_p \phi \,dW_t$  as $N \to \infty$. This proves (iv).

To conclude the proof of the theorem, recalling that $\WW$ denote the unique distributional solution of (\ref{eqn:continuity})
in $L^{\infty}_+([0,T];L^{1}(\R^{2n})\cap L^{\infty}(\R^{2n}))$ starting from $\bar \omega\Leb{2n}$ (see \cite[Theorem 6.1]{AFFGP}), we have proved $\tilde{\WW}=\WW$, and so
\begin{equation}
\lim_{k \to \infty}\sup_{t \in [0,T]}d_{{\mathscr
P}}(\tilde{W}_{\e_k}\rho^t_{\e_k}\Leb{2n},\WW_t\Leb{2n})=0.
\end{equation}
Since the limit $\WW_t\Leb{2n}$ is independent of the chosen subsequence, this implies
the convergence of the whole family, namely
\begin{equation}\label{eqn:family}
\lim_{\e \to 0}\sup_{t \in [0,T]}d_{{\mathscr
P}}(\tilde{W}_{\e}\rho^t_{\e}\Leb{2n},\WW_t\Leb{2n})=0,
\end{equation}
as desired.

\appendix
\section{Notations and some notions about density operators}
A density operator on $L^2(\R^n)$ is a positive, self-adjoint, trace-class operator, namely $\tilde{\rho}=\tilde{\rho}^*, \tilde{\rho}\geq 0 $ and $\textrm{tr}(\tilde{\rho})=1$, where the trace is defined as follows:
\begin{equation}\label{defn:trace}
\textrm{tr}(\tilde{\rho}):=\sum_{j\in \N} \langle \varphi_j, \tilde{\rho}\varphi_j \rangle
\end{equation}
with $\{\varphi_j\}_{j \in \N}$ is any orthonormal basis of $L^2(\R^n)$. It can be shown that each density operator $\tilde{\rho}$ is a compact operator, so it can be decomposed as follows
\begin{equation}
\label{eq:spectral dec}
\tilde{\rho}=\sum_{j\in \N} \lambda_j \langle \psi_{j}, \cdot \rangle \psi_{j}
\end{equation}
where $0\leq \lambda_1 \leq \lambda_2 \leq \dots \leq 1$, and $\{\psi_{j}\}_{j\in \N}$ is a orthonormal basis of eigenvectors of $\tilde{\rho}$. Therefore $\tilde{\rho}$ is an integral operator
and its kernel is
$$
\rho(x,y)=\sum_{j\in \N} \lambda_j  \psi_{j}(x)\overline{ \psi_{j}(y)},
$$
so that
$$
\tilde{\rho}\psi(x)=\int_{\R^n} \rho(x,y) \psi(y) \;dy.
$$
Observe that the trace condition on $\tilde{\rho}$ can be expressed as follows in terms of its kernel
\begin{equation}\label{trace condition kernel}
\textrm{tr}(\tilde{\rho})=\int_{\R^n}\rho(x,x) \, dx=1.
\end{equation}
The Wigner transform of $\rho$ is defined as
$$
W_\e\rho(x,p):=\frac{1}{(2\pi)^n}\int_{\R^n}\rho(x+\frac{\e}{2}y,x-\frac{\e}{2}y)e^{-ipy}dy,
$$
and the Husimi transform of $\rho$ as
$$
\tilde{W}_\e\rho:=W_\e\rho \ast G_{\e}^{(2n)}, \qquad
G_{\e}^{(2n)}(x,p):=G_{\e}^{(n)}(x)G_{\e}^{(n)}(p)=\frac{e^{-\frac{(|x|^2+|p|^2)}{\e}}}{(\pi \e)^n}.
$$
It is easy to check that the marginals of $W_\e\rho$ are
\begin{equation}\label{eqn:margWig}
\int_{\R^n} W_\e\rho(x,p) \; dp = \rho(x,x) \qquad \textrm{and} \qquad \int_{\R^n} W_\e\rho(x,p) \; dx = \frac{1}{(2\pi \e)^n}\mathcal{F}\left(\frac{p}{\e},\frac{p}{\e}\right)
\end{equation}
where
\begin{equation}
\label{eq:Fourier}
\mathcal{F}\rho\left(q,q \right)=\int_{\R^n} \rho(u,u) e^{-iq\cdot u } \; du.
\end{equation}
Similarly the marginals of $\tilde{W}_\e\rho$ are
\begin{equation}\label{eqn:margHusx}
\int_{\R^n} \tilde{W}_\e\rho(x,p) \; dp = \int_{\R^n} \rho(x-x',x-x')G_{\e}^{(n)}(x') \,dx'
\end{equation}
and
\begin{equation}\label{eqn:margHusp}
\int_{\R^n} \tilde{W}_\e\rho(x,p) \; dx = \frac{1}{(2\pi \e)^n}\int_{\R^n}\mathcal{F}\rho\left(\frac{p-p'}{\e},\frac{p-p'}{\e}\right)G_{\e}^{(n)}(p') \; dp'.
\end{equation}
Moreover, the Husimi transform is
nonnegative: indeed (see for instance \cite{lionspaul}),
\begin{equation}
\label{eq:husimi}
\tilde
W_\e\psi(x,p)=\frac{1}{\e^n}|\<\psi,\phi^\e_{x,p}\>|^2,
\end{equation}
where $\<\cdot,\cdot\>$ is the scalar product on $L^2(\R^n)$ and
$$
\phi^\e_{x,p}(y):=\frac{1}{(\pi\e)^{n/4}}
e^{-|x-y|^2/(2\e)}e^{-i(p\cdot y)/\e} \in L^2(\R^n), \qquad
\|\phi^\e_{x,p}\|=1.
$$
Hence $\tilde W_\e\psi \geq 0$, and using the spectral decomposition \eqref{eq:spectral dec}
one obtains the non-negativity of $\tilde W_\e\rho$ for any trace-class operator $\rho$.
Moreover, combining \eqref{trace condition kernel} and \eqref{eqn:margHusx}, it follows that $\tilde W_\e\rho$ is a probability measure.

\signaf
\signml
\signtp

\end{document}